\title{Quantum Gromov-Hausdorff convergence of spectral truncations for groups with polynomial growth}
\author{Ryo Toyota \thanks{Department of Mathematics, Texas A\&M University, College Station, TX 77843, USA}}
\date{ }
\begin{document}

\newtheorem{dfn}{Definition}[section]
\newtheorem{thm}{Theorem}[section]
\newtheorem{lem}{Lemma}[section]
\newtheorem{cor}{Corollary}[section]
\newtheorem{rmk}{Remark}[section]
\newtheorem*{pf}{Proof}
\newtheorem{ex}{Example}[section]
\newtheorem{prop}{Proposition}[section]
\newtheorem*{claim}{Claim}

\newcommand{\tr}{\text{tr}}

\maketitle

\begin{abstract}
For a unital spectral triple $(\mathcal{A}, H,D)$, we study when its truncation converges to itself. The spectral truncation is obtained by using the spectral projection $P_{\Lambda}$ of $D$ onto $[-\Lambda,\Lambda]$ to deal with the case where only a finite range of energy levels of a physical system is available. By restricting operators in $\mathcal{A}$ and $D$ to $P_{\Lambda}H$, we obtain a sequence of operator system spectral triples $\{(P_{\Lambda}\mathcal{A}P_{\Lambda},P_{\Lambda}H,P_{\Lambda}DP_{\Lambda})\}_{\Lambda}$. We prove that if the spectral triple is the one constructed using a discrete group with polynomial growth, then the sequence of operator systems $\{P_{\Lambda}\mathcal{A}P_{\Lambda}\}_{\Lambda}$ converges to $\mathcal{A}$ in the sense of quantum Gromov-Hausdorff convergence with respect to the Lip-norm coming from high order derivatives.
\end{abstract}

\tableofcontents

\section{Introduction}
In this paper, we study quantum Gromov-Hausdorff convergences of operator systems given by spectral truncations. The study of spectral truncation is initiated by A. Connes and W. D. van Suijlekom in \cite{connes2021spectral} to approximate the original spectral triple from the partial data of spectrum of Dirac operator (or Hamiltonian). More precisely, assume that we are given a unital spectral triple $(\mathcal{A},H,D)$ and for each positive number $\Lambda$, let $P_{\Lambda}\in B(H)$ be the projection onto the space spanned by eigenvectors of $D$ corresponding to eigenvalues in $[-\Lambda,\Lambda]$.
Spectral truncation is the procedure to restrict operators in $\mathcal{A}$ and $D$ to $P_{\Lambda}H$ to obtain an operator system version of a spectral triple $(P_{\Lambda}\mathcal{A}P_{\Lambda},P_{\Lambda}H,P_{\Lambda}DP_{\Lambda})$ of \cite{connes2021spectral}. The main purpose of this article is to study when the truncated operator systems $ P_{\Lambda}\mathcal{A}P_{\Lambda}$ approximate the original algebra $\mathcal{A}$ in the language of compact quantum metric space defined by Marc A. Rieffel in \cite{rieffel1999metrics} and \cite{rieffel2004gromov}.

For the spectral triple $(C^{\infty}(M),L^2(S_M),D_M)$ of a compact spin Riemannian manifold $M$ with spinor bundle $S_M$ and the Dirac operator $D_M$, we can recover the distance on $M$ from the spectral data by the Connes distance formula (Formula 1 of \cite{connes1994noncommutative} of section 6.1)
\begin{equation}
\label{diatance}
    d(x,y)=\sup\{|f(x)-f(y)|:f \in C^{\infty}(M),\|[D_M,f]\|\leq 1\},
\end{equation}
for $x,y \in M$.
If we regard the evaluation at $x \in M$ to a function $f \in C^{\infty}(M)$ as the evaluation of a function $f$ to the delta function $\delta_x$ supported at $x$, we can extend the formula \eqref{diatance} to the distance of state spaces. Namely for two states $\phi$ and $\psi$ on $C^{\infty}(M)$, we define the distance between them by
\begin{equation}
\label{stateconnesdistance}
    d(\phi,\psi)=\sup\{|\phi(f)-\psi(f)|:f \in C^{\infty}(M),\|[D_M,f]\|\leq 1\}.
\end{equation}
The idea of compact quantum metric space is to replace a compact metric space $(X,d)$  by a pair $(A,L)$ consists of an operator system $A$ (or Archimedean ordered unit vector space more generally) and a seminorm $L$ on $A$ so called a Lip-norm to define a distance on the state space of $A$ by the formula
\begin{equation}
\label{lipdistance}
    d(\phi,\psi)=\sup\{|\phi(f)-\psi(f)|:f \in A,L(f)\leq 1\},
\end{equation}
for two states $\phi$ and $\psi$ on $A$. We call such a pair $(A,L)$ a compact quantum metric space. Philosophically, Lip-norm $L$ is a norm of commutator with derivative (or Dirac operator) but we are more flexible and the actual requirements for a seminorm on $A$ to be a Lip-norm is in Definition 2.1. The most important one is that the distance defined by \eqref{lipdistance} has to induce weak-$*$ topology on the state space. Since we are working on operator systems, we can not impose Leibniz inequality on $L$ (for example, the norm of high order commutator with Dirac operator can be a Lip-norm in some cases \cite{antonescu2004metrics}.) For given two compact quantum metric spaces $(A,L_A)$ and $(B,L_B)$, we define the quantum Gromov-Hausdorff distance between them by a certain infimum of Hausdorff distance between their state spaces. We apply this framework to the operator systems obtained by spectral truncations. In \cite{leimbach2023gromov}, Gromov-Hausdorff convergence of the state speces of truncated operator systems $P_{\Lambda}\mathcal{A}P_{\Lambda}$ to the state spaces of the algebra $\mathcal{A}$ was proved when the spectral triple is the torus $(\mathcal{A},H,D)=(C^{\infty}(\mathbb{T}^d),L^2(S_{\mathbb{T}^d}),D_{\mathbb{T}^d})$ for $d=1,2,3$ using the metric defined in \eqref{stateconnesdistance}.

In this paper, we deal with spectral triples $(\mathbb{C}[G],\ell^2(G),D)$ consisting of the group algebra $\mathbb{C}[G]$ of a finitely generated group $G$ with a length function $\ell$, Hilbert space $\ell^2(G)$ and the Dirac operator $D$ that is the multiplication of the length function:
\begin{align*}
    D:\ell^2(G)\rightarrow \ell^2(G): \text{ }\delta_x \mapsto \ell(x)\delta_x. 
\end{align*}
Note that in this case, for each positive $\Lambda$, the spectral projection $P_{\Lambda}$ is the orthogonal projection onto the space spanned by vectors supported in the closed ball $B_{\Lambda}$ centered at the unit $e$ with radius $\Lambda$. The truncated operator system consists of operators of the form of Toeplitz type matrices:
\begin{equation}
\label{toeplitz}
    P_{\Lambda}\mathbb{C}[G]P_{\Lambda}:=\{(a_{xy^{-1}})_{x,y\in B_{\Lambda}}:\text{ }a_z\in \mathbb{C} \text{ for all }z\in B_{2\Lambda}\}.
\end{equation}

We use Lip-norms coming from high order derivatives. The use of high order derivatives in compact quantum metric spaces was initiated in \cite{antonescu2004metrics} and it produces new examples of compact quantum metric spaces. But our Lip-norm is slightly different from the one introduced in \cite{antonescu2004metrics} (the reason is in Remark 3.1). For each positive integer $s$, we define Lip-norms on group algebras and truncated operator systems by
\begin{align*}
    L_s(f)&:=\|\sum \ell(x)^s f(x)\delta_x\|_{B(\ell^2(G))}\\
    L_{s,\Lambda}(A)&:=\|(\ell(xy^{-1})^sa_{xy^{-1}})_{x,y\in B_{\Lambda}}\|_{B(P_{\Lambda}\ell^2(G))}
\end{align*}
for $f \in \mathbb{C}[G]$ and $A=(a_{xy^{-1}})_{x,y \in B_{\Lambda}}\in P_{\Lambda}\mathbb{C}[G]P_{\Lambda} $. In Lemma 3.1, we verify that these seminorms satisfy the requirement of Definition 2.1 for being Lip-norm. Now we can state our main theorem.
\begin{thm}
\rm{
    For each group $G$ with polynomial growth, there exists $s>0$ such that the sequence of compact quantum metric spaces $\{(P_{\Lambda}\mathbb{C}[G]P_{\Lambda},L_{s,\Lambda})\}_{\Lambda}$ converges to $(\mathbb{C}[G],L_s)$ in quantum Gromov-Hausdorff distance.
    }
\end{thm}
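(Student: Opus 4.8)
The plan is to prove Gromov-Hausdorff convergence by constructing, for each $\Lambda$, an explicit bridge between $(\mathbb{C}[G], L_s)$ and $(P_\Lambda\mathbb{C}[G]P_\Lambda, L_{s,\Lambda})$ and to estimate its length. The natural candidate pair of maps is the compression $f \mapsto P_\Lambda f P_\Lambda$ going one way, and on the other side we embed $P_\Lambda\mathbb{C}[G]P_\Lambda$ back into $\mathbb{C}[G]$ by sending a Toeplitz datum $(a_z)_{z \in B_{2\Lambda}}$ to the group algebra element $\sum_{z \in B_{2\Lambda}} a_z \delta_z$ (possibly with a Fejér-type damping factor to control norms). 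I would work with the order-unit bridge formalism of Rieffel (or equivalently the operator-system version of Kerr--Li): it suffices to exhibit, for each $\varepsilon$, a state space on an ambient operator system together with quantitative estimates showing that every Lip-ball element on one side is approximated within $\varepsilon_\Lambda$ (with $\varepsilon_\Lambda \to 0$) by an element on the other side with comparable Lip-norm.

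The key technical estimates, carried out in the order below, are: first, show that if $L_s(f) \le 1$ then $\|f - P_\Lambda f P_\Lambda\|$ is small, uniformly over the Lip-ball, with an explicit rate in $\Lambda$. The point is that $L_s(f) \le 1$ forces the ``tail mass'' of $f$ supported outside $B_{\Lambda}$ to be controlled: heuristically $\|[D,f]\| \le \|Df\| + \|fD\|$-type reasoning combined with $L_s(f) = \|\sum \ell(x)^s f(x)\delta_x\|$ shows that the Fourier coefficients $f(x)$ for $\ell(x)$ large must decay like $\ell(x)^{-s}$ in an operator-norm-averaged sense. Here is precisely where polynomial growth enters: if $|B_r| \le C r^D$, then a sum like $\sum_{\ell(x) > \Lambda} \ell(x)^{-2s}$ (or the operator-norm analogue via a Haagerup/Følner-type inequality on $\mathbb{C}[G]$) converges and has tail $O(\Lambda^{D-2s})$, so choosing $s > D/2$ (or somewhat larger to also control the completely bounded / matrix-amplified version needed for operator systems) makes the truncation error go to zero. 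Second, one checks the reverse: the compression map does not increase the Lip-norm too much, i.e. $L_{s,\Lambda}(P_\Lambda f P_\Lambda) \le C\, L_s(f)$ — this should be essentially immediate since compressing the ``twisted'' operator $\sum \ell(x)^s f(x)\delta_x$ by $P_\Lambda$ can only decrease its norm, modulo the bookkeeping that the twist of $P_\Lambda f P_\Lambda$ equals $P_\Lambda(\text{twist of }f)P_\Lambda$. Third, the embedding direction needs a Fejér-type averaging so that the embedded operator's norm is controlled by the matrix norm, together with a matching Lip-norm bound, again leaning on polynomial growth for the relevant kernel estimates.

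I would then assemble these into the bridge-length bound: combining step one (elements of $\mathbb{C}[G]$ are $\varepsilon_\Lambda$-close to images of compressions) with the Lip-norm comparability from steps two and three, one gets that the quantum Gromov-Hausdorff distance is $O(\varepsilon_\Lambda) \to 0$. It is cleanest to phrase this via the characterization of convergence through $\varepsilon$-isometries of state spaces induced by the pair of maps, as in the torus case of \cite{leimbach2023gromov}, but adapted to the operator-system / order-unit setting where one does not have Leibniz. A subtlety to handle along the way is that $L_{s,\Lambda}$ and $L_s$ are only seminorms with kernel the scalars, so all estimates must be performed on the quotient by $\mathbb{C}1$ (equivalently, one may normalize $f(e) = 0$); this is routine but must be stated.

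The main obstacle I anticipate is the first step's operator-norm tail estimate: bounding $\|f - P_\Lambda f P_\Lambda\|_{B(\ell^2 G)}$ — and more precisely its completely bounded version on matrix amplifications, which is what the Gromov-Hausdorff machinery for operator systems actually requires — purely in terms of $L_s(f) = \|\sum \ell(x)^s f(x)\delta_x\|$. One cannot just use $\ell^2$ Cauchy--Schwarz on Fourier coefficients because that controls the wrong norm; the right tool is a Haagerup-type inequality, namely that for groups of polynomial growth the convolution (operator) norm of a finitely supported function supported on a sphere of radius $r$ is bounded by $\mathrm{poly}(r)$ times its $\ell^2$-norm. Establishing (or invoking) the correct polynomial-growth Haagerup inequality with the precise dependence on $r$, and then summing the resulting dyadic tail, is the crux; once the exponent $s$ is chosen large enough relative to the growth degree $D$ (and the Haagerup polynomial degree), everything else is bookkeeping.
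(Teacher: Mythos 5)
Your overall architecture is the same as the paper's: a unital completely positive compression $q_\Lambda(f)=P_\Lambda\lambda(f)P_\Lambda$, a Fej\'er-damped embedding $r_\Lambda$ back into $\mathbb{C}[G]$, the observation that both maps intertwine $d$ with $d_\Lambda$ (hence are Lip-contractive), and the bridge/$\varepsilon$-isometry criterion (Lemma 2.1) to convert the two approximation estimates into a quantum Gromov--Hausdorff bound. Your step on the algebra side is also essentially the paper's Theorem 4.1: $f-r_\Lambda\circ q_\Lambda(f)=f-\mathcal{F}_\Lambda(f)$ is controlled in weighted $\ell^2$ via the F{\o}lner property of the balls, and then converted to an operator-norm bound by the Haagerup inequality for groups of polynomial growth, at the cost of raising $s$.

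However, there is a genuine gap, and you have located the crux in the wrong place. The direction you flag as the main obstacle (controlling $\|f-\mathcal{F}_\Lambda(f)\|$ by $L_s(f)$ for $f\in\mathbb{C}[G]$) is the \emph{easier} one. The hard direction is the other composite: showing $\|T-q_\Lambda\circ r_\Lambda(T)\|\le\epsilon_\Lambda L_{s,\Lambda}(T)$ for a truncated Toeplitz-type operator $T=(a_{xy^{-1}})_{x,y\in B_\Lambda}$, where the only input is the \emph{truncated} Lip-norm $L_{s,\Lambda}(T)=\|d_\Lambda^sT\|_{B(P_\Lambda\ell^2(G))}$. Your proposal treats this as symmetric bookkeeping (``again leaning on polynomial growth for the relevant kernel estimates''), but the Haagerup inequality simply fails after truncation: for $z$ with $\Lambda<\ell(z)\le2\Lambda$ the symbol coefficient $a_z$ appears in only very few matrix entries $(x,y)$ with $xy^{-1}=z$ and $x,y\in B_\Lambda$, so no single column of $d_\Lambda^sT$ controls the full weighted $\ell^2$-norm of the symbol over $B_{2\Lambda}$ (the paper's explicit counterexample is $f=\delta_z$ with $\Lambda<\ell(z)\le 2\Lambda$). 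The missing idea is to sum $\|d_\Lambda^{s'}T\,\delta_x\|^2$ over \emph{all} $x\in B_\Lambda$, which produces the symbol's weighted $\ell^2$-norm with the degenerate weight $\#(B_\Lambda\cap B_\Lambda(z))$, and then to compensate for the degeneracy near $\ell(z)=2\Lambda$ by splitting at $\ell(z)\le\Lambda^{\beta/2}$, invoking Tessera's boundary-volume estimate $\#(B_{\Lambda+1}\setminus B_\Lambda)/\#B_\Lambda\le C\Lambda^{-\beta}$, and paying for the small-weight region by raising the derivative order from $s$ to $s+d+1$ where $\Lambda^{d\beta}$ dominates $\#B_\Lambda$. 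Without this (or an equivalent device), your bridge-length bound cannot be closed, so the proposal as written does not yet constitute a proof. (A minor additional point: in Rieffel's order-unit framework used here only ordinary states enter, so the completely bounded/matrix-amplified estimates you worry about are not actually needed; complete positivity of $r_\Lambda$ is used only to guarantee that it is unital and positive.)
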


\section{Preliminaries about compact quantum metric spaces}

In this section, we recall the notion of compact quantum metric spaces and quantum analogue of Gromov-Hausdorff convergence introduced by Marc A Rieffel in \cite{rieffel1999metrics} and \cite{rieffel2004gromov}. We recall some definitions and properties introduced in \cite{rieffel2023convergence}.

\begin{dfn}
\rm{
    Let $(A,e)$ be a pair of an operator system $A$ with its unit $e$. Then a seminorm $L$, which may take value $\infty$ on $A$ is called a Lip-norm if $L$ satisfies the following conditions:
    \begin{itemize}
        \item For $a \in A$, we have $L(a)=L(a^*)$, and $L(a)=0$ if and only if $a \in \mathbb{C}e$.
        \item $\{a \in A;\text{ }L(a)< \infty\}$ is dence in $A$.
        \item Define a distance $d^L$ on the state space $S(A)$ by
            \begin{align*}
                d^L(\phi,\psi):=\sup\{|\phi(a)-\psi(a)|:\text{ }a\in A \text{ and }L(a)\leq 1  \}
            \end{align*}
            for any states $\phi$ and $\psi$ on $A$. We require that the induced topology on $S(A)$ by the distance $d^L$ agrees with the weak-$*$ topology.
        \item $L$ is lower semi-continuous with respect to the operator norm.
    \end{itemize}

    We call a pair $(A,L)$ of operator system $A$ with a Lip-norm $L$ a compact quantum metric space.
    }
\end{dfn}

In the next definition, we recall the definition of quantum Gromov-Hausdorff distance between two compact quantum metric spaces $(A,L^A)$ and $(B,L^B)$. We denote the coordinate quotient maps by
 \begin{align*}
     q_A:A\oplus B \rightarrow A\text{ and } q_B:A \oplus B \rightarrow B.
 \end{align*}
Each Lip-norm $L$ on $A\oplus B$ induces a seminorm $q_A^*(L)$ (resp. $q_B^*(L)$) on $A$ (resp. on $B$) by
\begin{align*}
    q_A^*(L)(a):=\inf \{L(a,b):b\in B\} \text{ (resp. } q_B^*(L)(b):=\inf \{L(a,b):a\in A\}\text{)}.
\end{align*}
There are embeddings of state spaces
\begin{align*}
    q_A^*:S(A)\hookrightarrow S(A\oplus B) \text{ and } q_B^*:S(B) \hookrightarrow S(A \oplus B)
\end{align*}
defined by the pullbacks by $q_A$ and $q_B$. For a Lip-norm $L$ on $A\oplus B$, we denote the Hausdorff distance between $q_A^*S(A)$ and $q_B^*S(B)$ in the compact metric space $(S(A\oplus B),d^L)$ by 
\begin{align*}
    \text{dist}_H^{d^L}(q_A^*S(A),q_B^*S(B)).
\end{align*}

\begin{dfn}
\rm{
    Let $\mathcal{M}(L_A,L_B)$ be the set of all Lip-norm on $A\oplus B$ which induces $L_A$ and $L_B$. Then the quantum Gromov-Hausdorff distance between $(A,L_A)$ and $(B,L_B)$ is defined to be
    \begin{align*}
        \text{dist}_q((A,L_A),(B,L_B)):=\inf \{ \text{dist}_H^{d^L}(S(A),S(B)): \text{ }L\in \mathcal{M}(L_A,L_B) \}.
    \end{align*}
}
\end{dfn}

We finish this section by stating a criterion of quantum Gromov-Hausdorff convergence, which is used in \cite{rieffel2004gromov} to show the quantum Gromov-Hausdorff convergence of matrix algebras to $2$-sphere and explicitly proved in \cite{van2021gromov} as a sufficient condition for Gromov-Hausdorff convergences of state spaces.

\begin{lem}
\rm{
    Let $(A,L_A)$ and $(B,L_B)$ be compact quantum metric spaces. Assume that there are unital positive maps which are contractive with respect to their Lip-norms
    \begin{align*}
        r:A \rightarrow B \text{ and } s:B \rightarrow A
    \end{align*}
    such that
    \begin{align*}
        \|s \circ r(a)-a\| &\leq \epsilon L_A(a) \\
        \|r \circ s(b)-b\| &\leq \epsilon L_{B}(b).
    \end{align*}
    Then the quantum Gromov-Hausddorff distance between $(A,L_A)$ and $(B,L_B)$ is less than $2\epsilon$.
    }
\end{lem}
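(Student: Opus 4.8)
The plan is to exhibit a single admissible Lip-norm on $A \oplus B$ realizing the bound, rather than to compute the infimum of Definition 2.2 directly. Concretely, I would set
\[
L(a,b) := \max\left\{ L_A(a),\ L_B(b),\ \tfrac{1}{\epsilon}\|r(a)-b\| \right\}
\]
for $(a,b) \in A \oplus B$, and then (i) check that $L \in \mathcal{M}(L_A,L_B)$ and (ii) estimate the Hausdorff distance $\text{dist}_H^{d^L}(q_A^*S(A), q_B^*S(B)) \le 2\epsilon$, so that $\text{dist}_q$, being an infimum over $\mathcal{M}(L_A,L_B)$, is at most $2\epsilon$. Throughout I would use the two standard structural facts about a unital positive linear map $T$ between operator systems: it is $\ast$-preserving ($T(a^*)=T(a)^*$) and contractive ($\|T\|\le 1$); these apply to both $r$ and $s$.

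For step (i) I would first verify the four axioms of Definition 2.1. Symmetry $L(a,b)=L(a^*,b^*)$ follows from the $\ast$-invariance of $L_A,L_B$ together with $\|r(a^*)-b^*\| = \|(r(a)-b)^*\| = \|r(a)-b\|$, using that $r$ is $\ast$-preserving; and $L(a,b)=0$ forces $a\in\mathbb{C}e_A$, $b\in\mathbb{C}e_B$ with $r(a)=b$, whence $(a,b)\in\mathbb{C}(e_A,e_B)$ because $r$ is unital. Density of $\{L<\infty\}$ is inherited from the density of $\{L_A<\infty\}$ and $\{L_B<\infty\}$, since the third term of $L$ is always finite, and lower semicontinuity follows from that of $L_A,L_B$ together with continuity of the norm. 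The one genuinely nontrivial axiom is that $d^L$ induce the weak-$\ast$ topology on $S(A\oplus B)$; here I would invoke Rieffel's total-boundedness criterion, showing that after normalizing by a fixed state the set $\{(a,b):L(a,b)\le 1\}$ is totally bounded in $A\oplus B$. This is where the hypothesis that $(A,L_A)$ and $(B,L_B)$ are already compact quantum metric spaces is essential: $L(a,b)\le 1$ forces $L_A(a)\le 1$ and $L_B(b)\le 1$, so the relevant set embeds into a product of two totally bounded sets.

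I would then check that $L$ induces $L_A$ and $L_B$. Since $L(a,b)\ge L_A(a)$ for all $b$, we have $q_A^*L(a)=\inf_b L(a,b)\ge L_A(a)$; choosing $b=r(a)$ gives $L(a,r(a))=\max\{L_A(a),L_B(r(a)),0\}=L_A(a)$, using that $r$ is Lip-contractive so $L_B(r(a))\le L_A(a)$. Symmetrically $q_B^*L(b)\ge L_B(b)$, and choosing $a=s(b)$ yields $L(s(b),b)=L_B(b)$, because $s$ is Lip-contractive ($L_A(s(b))\le L_B(b)$) and $\|r(s(b))-b\|\le\epsilon L_B(b)$ by the second hypothesis. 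Hence $L\in\mathcal{M}(L_A,L_B)$.

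For step (ii), given a state $\phi$ on $A$ I would pair it with $\psi:=\phi\circ s\in S(B)$ (a state since $s$ is unital positive). As $q_A^*\phi$ and $q_B^*\psi$ evaluate to $\phi$ on the $A$-coordinate and $\phi\circ s$ on the $B$-coordinate, $d^L(q_A^*\phi,q_B^*\psi)=\sup\{|\phi(a-s(b))|:L(a,b)\le 1\}$, and the triangle inequality with $\|a-s(b)\|\le\|a-s(r(a))\|+\|s(r(a)-b)\|\le\epsilon L_A(a)+\|r(a)-b\|\le 2\epsilon L(a,b)$ (using the first hypothesis and contractivity of $s$) bounds this by $2\epsilon$. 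The reverse direction, pairing a state $\psi$ on $B$ with $\psi\circ r$, is bounded by $\epsilon$ in the same fashion. Thus the Hausdorff distance is at most $2\epsilon$, giving the claimed bound. I expect the main obstacle to be the rigorous verification that $d^L$ metrizes the weak-$\ast$ topology (the third axiom of Definition 2.1), as the other steps are short computations; everything else reduces to the elementary properties of unital positive maps and the triangle inequality.
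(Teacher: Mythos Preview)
Your approach is essentially the paper's: both build a Lip-norm on $A\oplus B$ of the form $L(a,b)=L_A(a)\lor L_B(b)\lor N(a,b)$ and then estimate the Hausdorff distance directly. The only cosmetic difference is that the paper takes $N(a,b)=\tfrac{1}{\epsilon}\|a-s(b)\|$ rather than your $\tfrac{1}{\epsilon}\|r(a)-b\|$; accordingly the roles of the two directions in the Hausdorff estimate are swapped, but the final bound $2\epsilon$ is the same.

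Where the paper differs in execution is precisely at the point you flagged as the ``main obstacle'': rather than verifying the weak-$*$ axiom by hand, the paper observes that $N$ is a \emph{bridge} in the sense of Rieffel (Definition~5.1 of \cite{rieffel2004gromov}) and invokes his Theorem~5.2 to conclude immediately that $L$ is an admissible Lip-norm inducing $L_A$ and $L_B$. Your direct total-boundedness sketch is close but not quite right as stated: after normalizing by a single state on $A\oplus B$ you cannot separately normalize both coordinates, so the set does not literally embed in a product of two totally bounded sets. What actually works is that normalizing (say) the $A$-coordinate makes the $a$'s totally bounded, and then the coupling $\|r(a)-b\|\le\epsilon$ forces the $b$'s to lie in an $\epsilon$-neighborhood of the totally bounded set $r(\{a\})$; the condition $L_B(b)\le 1$ alone is not enough. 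This is exactly the content hidden inside Rieffel's bridge theorem.
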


The above criterion should be well-known among experts but the author was not able to find any precise reference. For the convenience of readers, we give a proof of the above lemma. 

We use the next concept and result introduced in \cite{rieffel2004gromov} (the second half of the book, which consists of two papers).

\begin{dfn}[Definition 5.1 of \cite{rieffel2004gromov}]
    \rm{
    Let $(A,L_A)$ and $(B,L_B)$ be compact quantum metric spaces. By a bridge between $(A,L_A)$ and $(B,L_B)$ we mean a seminorm, $N$, on $A\oplus B$ such that
    \begin{description}
        \item [1)] $N$ is continuous for the norm on $A\oplus B$.

        \item [2)] $N(e_A,e_B)=0$ but $N(e_A,0)\neq 0$.

        \item [3)] For any $a\in A$ and $\delta>0$ there is $b \in B$ such that
        \begin{align*}
            L_B(b) \lor N(a,b) \leq L_A(a) +\delta,
        \end{align*}
        and similarly for $A$ and $B$ interchanged. Here the symbol $\lor$ means "maximum".
    \end{description}
    }
\end{dfn}

\begin{thm}[Theorem 5.2 of \cite{rieffel2004gromov}]
\rm{
    Let $N$ be a bridge between a compact quantum metric spaces $(A,L_A)$ and $(B,L_B)$. Define $L$ on $A\oplus B$ by
    \begin{align*}
        L(a,b)=L_A(a)\lor L_B(b) \lor N(a,b).
    \end{align*}
    Then $L$ is a Lip-norm which induces $L_A$ and $L_B$.
    }
\end{thm}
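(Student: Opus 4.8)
The plan is to check, one clause at a time, that $L(a,b)=L_A(a)\lor L_B(b)\lor N(a,b)$ satisfies Definition 2.1 and that $q_A^*(L)=L_A$, $q_B^*(L)=L_B$, reserving the weak-$*$ topology clause for last since it is the real content. First, $L$ is a seminorm, being a pointwise maximum of the three seminorms $L_A\circ q_A$, $L_B\circ q_B$, and $N$ (homogeneity and subadditivity pass through $\lor$). The two identities $q_A^*(L)=L_A$ and $q_B^*(L)=L_B$ are where clause 3) of the bridge enters: the inequality $q_A^*(L)(a)=\inf_b L(a,b)\ge L_A(a)$ is immediate because $L(a,b)\ge L_A(a)$ for every $b$, while for the reverse, given $\delta>0$ I would pick $b$ as in 3) so that $L_B(b)\lor N(a,b)\le L_A(a)+\delta$, whence $L(a,b)=L_A(a)\lor L_B(b)\lor N(a,b)\le L_A(a)+\delta$; letting $\delta\to 0$ gives $q_A^*(L)(a)\le L_A(a)$, and symmetrically for $B$.

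Next I would dispose of the easy clauses. For the null space, $L(a,b)=0$ forces $L_A(a)=L_B(b)=0$, hence $a=\alpha e_A$ and $b=\beta e_B$ for scalars $\alpha,\beta$; then, using $N(\beta e_A,\beta e_B)=|\beta|\,N(e_A,e_B)=0$ from clause 2) together with the triangle inequality, $N(\alpha e_A,\beta e_B)=N\big((\alpha-\beta)(e_A,0)\big)=|\alpha-\beta|\,N(e_A,0)$, and since $N(e_A,0)\ne 0$ this vanishes iff $\alpha=\beta$, i.e.\ $(a,b)\in\mathbb{C}(e_A,e_B)$. Thus clause 2) of the bridge is exactly what pins the null space down to the scalars. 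Because $N$ is norm-continuous (clause 1)) it is finite everywhere, so $\{L<\infty\}=\{L_A<\infty\}\times\{L_B<\infty\}$, which is dense by density of the two factors; lower semicontinuity of $L$ follows since $L_A,L_B$ are lower semicontinuous, $q_A,q_B$ are continuous, $N$ is continuous, and a supremum of lower semicontinuous functions is lower semicontinuous; and $*$-invariance of $L$ follows from that of $L_A$, $L_B$, and of the bridge $N$ (which we take to be $*$-invariant).

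The crux is the weak-$*$ topology clause, and here I would invoke Rieffel's total-boundedness criterion from \cite{rieffel1999metrics}: $d^L$ induces the weak-$*$ topology on $S(A\oplus B)$ iff $\{(a,b):L(a,b)\le 1\}$ is totally bounded in the quotient $(A\oplus B)/\mathbb{C}(e_A,e_B)$. The subtle point is that this quotient differs by exactly one dimension from $(A/\mathbb{C}e_A)\oplus(B/\mathbb{C}e_B)$, namely the ``relative mass'' direction spanned by the class of $(e_A,0)$. Fixing states $\tau_A$ on $A$ and $\tau_B$ on $B$, the Lip-norm property of $L_A,L_B$ makes $\tilde K_A=\{a:L_A(a)\le 1,\ \tau_A(a)=0\}$ and $\tilde K_B$ norm-bounded and totally bounded. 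For $(a,b)$ with $L(a,b)\le1$, set $\tilde a=a-\tau_A(a)e_A\in\tilde K_A$, $\tilde b=b-\tau_B(b)e_B\in\tilde K_B$, $s=\tau_A(a)$, $t=\tau_B(b)$, and decompose $(a,b)=(\tilde a,\tilde b)+(s-t)(e_A,0)+t(e_A,e_B)$, so that modulo $\mathbb{C}(e_A,e_B)$ the class is $[(\tilde a,\tilde b)]+(s-t)[(e_A,0)]$.

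The first summand ranges over a totally bounded set, so everything reduces to bounding the scalar $s-t$ — and this is precisely where clauses 1) and 2) on $N$ do their work. Since $N$ kills $\mathbb{C}(e_A,e_B)$ and $N(a,b)\le 1$, the triangle inequality yields $|s-t|\,N(e_A,0)=N\big((s-t)(e_A,0)\big)\le N(a,b)+N(\tilde a,\tilde b)\le 1+C\|(\tilde a,\tilde b)\|$, where $C$ is the continuity constant of $N$ from clause 1); boundedness of $\tilde K_A,\tilde K_B$ bounds the right-hand side, and dividing by $N(e_A,0)\ne 0$ (clause 2)) gives a uniform bound $|s-t|\le M$. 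Hence the image of $\{L\le1\}$ lies in a totally bounded set plus the bounded segment $\{\lambda[(e_A,0)]:|\lambda|\le M\}$ in a one-dimensional subspace, so it is totally bounded; Rieffel's criterion then gives the weak-$*$ topology, finishing the verification that $L$ is a Lip-norm. I expect this last bound on $s-t$ to be the main obstacle, since it is the one step that genuinely requires both continuity of $N$ and its nondegeneracy $N(e_A,0)\ne 0$, and it is the conceptual reason a bridge must satisfy conditions 1) and 2).
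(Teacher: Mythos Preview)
The paper does not supply its own proof of this theorem; it is quoted verbatim as Theorem~5.2 of \cite{rieffel2004gromov} and used as a black box in the proof of Lemma~2.1. So there is nothing in the present paper to compare your argument against.

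That said, your argument is essentially Rieffel's original one and is correct in substance. The only soft spot is the $*$-invariance clause: you write ``which we take to be $*$-invariant,'' but Definition~2.3 as stated here does not require $N(a,b)=N(a^*,b^*)$. In Rieffel's original setting this is a non-issue because he works with (real) order-unit spaces; the present paper has transplanted the definitions to complex operator systems without adding a $*$-invariance hypothesis on the bridge, so strictly speaking the clause $L(a,b)=L(a^*,b^*)$ need not follow. This is more a wrinkle in the paper's adaptation than a defect in your proof, and is easily repaired either by adding $*$-invariance to the bridge axioms or by symmetrising $N$. Everything else---the identification $q_A^*(L)=L_A$ via clause~3), the null-space computation via clause~2), and the total-boundedness argument in the quotient using continuity of $N$ to bound the scalar $s-t$---is exactly how the result is proved.
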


\begin{lem}
\rm{
    Under the assumption of Lemma 2.1, we define a seminorm on $A \oplus B$ by 
    \begin{align*}
        N(a,b):=\frac{1}{\epsilon}\|a-s(b)\|.
    \end{align*}
    Then $N$ is a bridge between $(A,L_A)$ and $(B,L_B)$.
    }
\end{lem}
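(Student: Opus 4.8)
The plan is to verify directly the three conditions of Definition 2.3 for the seminorm $N(a,b)=\frac{1}{\epsilon}\|a-s(b)\|$, using only the data $r,s,\epsilon$ furnished by the hypotheses of Lemma 2.1.

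\emph{Norm continuity (condition 1).} A unital positive map between operator systems is bounded, so $N(a,b)\le \frac{1}{\epsilon}\big(\|a\|+\|s\|\,\|b\|\big)$, which is continuous in $(a,b)$ for the norm on $A\oplus B$.

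\emph{Condition 2.} Since $s$ is unital we have $s(e_B)=e_A$, hence $N(e_A,e_B)=\frac{1}{\epsilon}\|e_A-e_A\|=0$; on the other hand $N(e_A,0)=\frac{1}{\epsilon}\|e_A\|=\frac{1}{\epsilon}\neq 0$.

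\emph{Condition 3.} Given $a\in A$ and $\delta>0$, I would take $b:=r(a)$. Because $r$ is contractive for the Lip-norms, $L_B(r(a))\le L_A(a)$; and the first estimate in the hypothesis of Lemma 2.1 gives $N(a,r(a))=\frac{1}{\epsilon}\|a-s\circ r(a)\|\le L_A(a)$. Thus $L_B(b)\vee N(a,b)\le L_A(a)\le L_A(a)+\delta$. For the interchanged version, given $b\in B$ and $\delta>0$, take $a:=s(b)$: then $N(s(b),b)=0$, and since $s$ is contractive for the Lip-norms $L_A(s(b))\le L_B(b)$, so $L_A(a)\vee N(a,b)\le L_B(b)\le L_B(b)+\delta$. (Both choices in fact work already with $\delta=0$.)

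I do not anticipate any genuine obstacle here: every step is immediate from the hypotheses, the only point worth recording being that the unital positive map $s$ is bounded, which is what makes $N$ norm continuous. Note also that the second estimate in the hypothesis of Lemma 2.1, the one involving $r\circ s$, plays no role in establishing that $N$ is a bridge; it will enter only afterwards, when $N$ is combined with $L_A$ and $L_B$ via Theorem 2.2 and one estimates $\text{dist}_H^{d^L}(q_A^*S(A),q_B^*S(B))$ in order to deduce the conclusion of Lemma 2.1.
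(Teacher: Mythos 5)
Your proof is correct and follows essentially the same route as the paper: the paper also takes $b=r(a)$ and $a=s(b)$ to verify condition 3, treating conditions 1 and 2 as immediate (you spell them out, which is fine). Your observation that the $r\circ s$ estimate is not needed for the bridge itself but only later for the Hausdorff distance bound matches how the paper uses it.
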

\begin{pf}
\rm{
    We only need to prove the third condition in Definition 2.3. We choose $b=r(a)$, then
    \begin{align*}
        L_B(b) \lor N(a,b)&=L_B(r(a)) \lor \frac{1}{\epsilon}\|a-s\circ r(a)\| \\
        &= L_A(a) \lor \frac{1}{\epsilon}\cdot (\epsilon L_A(a))=L_A(a).
    \end{align*}
    }
    We verify the same condition for $A$ and $B$ interchanged. Fix any $b\in B$. We choose $a=s(b)$. Then we have
    \begin{align*}
        L_A(a) \lor N(a,b) = L_A(s(b)) \lor \frac{1}{\epsilon} \|s(b)-s(b)\|\leq L_B(b).
    \end{align*}
    \qed
\end{pf}

\begin{proof}[Proof of Lemma 2.1]
\rm{
Let $L$ be the Lip-norm defined in Theorem 2.1 for the bridge defined in Lemma 2.2. It suffices to show the Hausdorff distance $ \text{dist}_H^{d^L}(q_A^*S(A),q_B^*S(B)) $ is smaller than $2\epsilon$.
First take any $\mu \in S(A)$. We show
\begin{align*}
    d^L(q_A^*(\mu), q_B^*s^*(\mu)) \leq \epsilon.
\end{align*}
For any $(a,b)\in A\oplus B$ with $L(a,b)\leq 1$, we have
\begin{align*}
    |q_A^*(\mu)(a,b)-q_B^*s^*(\mu)(a,b)|=|\mu(a)-\mu(s(b))|\leq \|\mu\|\cdot \|a-s(b)|\leq \epsilon,
\end{align*}
because of the definition of $N$. This means $q_A^*S(A)$ is contained in the $\epsilon$-neighborhood of $q_B^*S(B)$.
For the other direction, we take any $\nu\in S(B)$. We show 
\begin{align*}
    d^L(q_A^*r^*(\nu), q_B^*(\nu)) \leq 2\epsilon.
\end{align*}
For any $(a,b)\in A\oplus B$ with $L(a,b)\leq 1$, we have
\begin{align*}
    |q_A^*r^*(\nu)(a,b)- q_B^*(\nu)(a,b)|&=|\nu(r(a))-\nu(b)|\\
    &\leq  |\nu(r(a))-\nu(r\circ s(b))|+|\nu(r\circ s(b))-\nu(b)|\\
    &\leq \|\nu\|\cdot \|a-s(b)\|+ \|\nu\|\cdot \|r\circ s(b)-b\| \leq 2\epsilon.
\end{align*}
This means $q_B^*S(B)$ is contained in the $2\epsilon$-neighborhood of $q_A^*S(A)$. 
}
\end{proof}

\section{Group $C^*$-algebras as compact quantum metric spaces}

In this section, we introduce quantum metric structures on group $C^*$-algebras and their spectral truncations. For a discrete group $G$, there is a natural Dirac operator on $\ell^2(G)$ introduced by A. Connes in \cite{connes1989compact}. The seminorm defined to be the norm of commutator with the Dirac operator has been studied in \cite{ozawa2005hyperbolic},\cite{antonescu2004metrics} and \cite{christ2017nilpotent}. But as shown in Remark 3.1, this definition is not a Lip-norm on truncated operator systems in the sense of Definition 2.1. Therefore we introduce a different definition of Lip-norm on group $C^*$-algebras. 

We first introduce several notations for discrete groups which will be used later. Let $G$ be a group generated by a finite symmetric subset $S=S^{-1} \subset G$. Then this generator $S$ induces a word length $\ell:G\rightarrow\mathbb{R}_{+}$ and a distance $d$ on $G$ defined by $d(x,y)=\ell(x^{-1}y)$ for all $x,y \in G$. The closed ball of radius $\Lambda$ centered at $x \in G$ is denoted by $B_{\Lambda}(x)$ and especially when $x=e$ it is denoted by $B_{\Lambda}$. We denote the left regular representation of group algebra $\mathbb{C}[G]$ on $\ell^2(G)$ by $\lambda$. 

The following example provides an interesting class of compact quantum metric spaces.
\begin{ex}
\rm{
    Let $G$ be a finitely generated group with a word length $\ell$. We define a Dirac operator on $\ell^2(G)$ by
    \begin{align*}
        D:\ell^2(G) \rightarrow \ell^2(G); \text{ } \sum f(x)\delta_x \rightarrow \sum \ell(x)f(x)\delta_x.
    \end{align*}
    Then we define a seminorm on a group algebra $\mathbb{C}[G]$ by
    \begin{align*}
        L'(f):=\|[D,\lambda(f)]\| <\infty
    \end{align*}
    for $f \in \mathbb{C}[G]$. It is shown that if $G$ is hyperbolic (\cite{ozawa2005hyperbolic}) or polynomial growth (\cite{christ2017nilpotent}), then $L'$ is a Lip-norm.
    }
\end{ex}

For a spectral triple $(\mathcal{A},H,D)$, the natural choice of Lip-norm is $L'(a):=\|[D,a]\|$ (if it satisfies the conditions in Definition 2.1) because in commutative case this distance coincides with the distance of the base space by the Connes distance formula \eqref{diatance}. But we will not use this Lip-norm in this paper. The reason is that this does not give a Lip-norm for truncated operator system as we see in the next remark.

\begin{rmk}
\rm{
    Let $(\mathcal{A},H,D)=(\mathbb{C}[G],\ell^2(G),D)$ be the spectral triple of a discrete group $G$, where the Dirac operator $D$ is the one defined in Example 3.1. Then we can form an operator system spectral triple $(P_{\Lambda}\mathcal{A}P_{\Lambda},P_{\Lambda}H,P_{\Lambda}DP_{\Lambda})$. In this case, the operator system $P_{\Lambda}\mathcal{A}P_{\Lambda}$ is specified in \eqref{toeplitz}.
    We define a seminorm $L'_{\Lambda}$ on $P_{\Lambda}\mathcal{A}P_{\Lambda}$ by
    \begin{align*}
        L'_{\Lambda}(A):=\|[P_{\Lambda}DP_{\Lambda},A]\|
    \end{align*}
    for $A=(a_{xy^{-1}})_{x,y \in B_{\Lambda}}$. Let's say $G=\mathbb{Z}$ and define a matrix $A\in P_{\Lambda}\mathbb{C}[G]P_{\Lambda}$ by $a_{2\Lambda}=1$ and $a_n=0$ for $n=-2\Lambda,-2\Lambda+1, \cdots, 2\Lambda-1$. Then 
    \begin{align*}
        [P_{\Lambda}DP_{\Lambda},A]=((\ell(x)-\ell(y))\cdot a_{x-y})_{x,y=-\Lambda}^{\Lambda}.
    \end{align*}
    By construction, the $(i,j)$-entry of above matrix is $0$ if $(i,j)\neq (\Lambda,-\Lambda)$ but for $(\Lambda,-\Lambda)$-entry, we have $\ell(\Lambda)-\ell(-\Lambda)=0$. So $ [D_{\Lambda},A]=0 $ and $L'_{\Lambda}$ has a nontrivial kernel other than scalar matrices.
    }
\end{rmk}

We introduce another seminorms, which might be more natural analogue of Lipschitz norm of commutative case. We fix the notations in the next definition through this paper.
\begin{dfn}
\rm{
    We define derivatives on group algebra $\mathbb{C}[G]$ and its truncated operator systems $P_{\Lambda}\mathbb{C}[G]P_{\Lambda}$ by
    \begin{align*}
        d : \mathbb{C}[G] \rightarrow \mathbb{C}[G] &;\text{ }\sum f(x)\delta_x \rightarrow \sum \ell(x)f(x)\delta_x \\
        d_{\Lambda}:P_{\Lambda}\mathbb{C}[G]P_{\Lambda} \rightarrow P_{\Lambda}\mathbb{C}[G]P_{\Lambda}&;\text{ } (a_{xy^{-1}})_{x,y \in B_{\Lambda}} \mapsto (\ell(xy^{-1})a_{xy^{-1}})_{x,y \in B_{\Lambda}}.
    \end{align*}
    For each positive integer $s$, we define seminorms by $L_s(f):=\|d^sf\|_{B(\ell^2(G))}$ and $L_{s,\Lambda}(T):=\|d_{\Lambda}^sT\|_{B(P_{\Lambda}\ell^2(G))}$ for $f\in \mathbb{C}[G]$ and $T\in P_{\Lambda}\mathbb{C}[G]P_{\Lambda} $.
    }
\end{dfn}

\begin{lem}
\rm{ 
    For any finitely generated group $G$ and $s$, $L_s$ is lower semicontinuous with respect to the reduced $C^*$-norm. 
}
\end{lem}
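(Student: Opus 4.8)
The plan is to prove lower semicontinuity of $L_s$ by unpacking what it means and reducing it to a pointwise-limit argument on matrix coefficients. Suppose $f_n \to f$ in the reduced $C^*$-norm $\|\cdot\|_{C^*_r(G)}$, and suppose $\liminf_n L_s(f_n) = M < \infty$; passing to a subsequence we may assume $L_s(f_n) \to M$ and that each $L_s(f_n) \le M + 1$. We must show $L_s(f) \le M$, i.e. $\|d^s f\|_{B(\ell^2(G))} \le M$. The key observation is that norm convergence in $C^*_r(G)$ implies convergence of each Fourier coefficient: since $f \mapsto f(x) = \langle \lambda(f)\delta_e, \delta_x\rangle$ is a bounded linear functional on $C^*_r(G)$, we get $f_n(x) \to f(x)$ for every $x \in G$, hence also $\ell(x)^s f_n(x) \to \ell(x)^s f(x)$ for every $x$.

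The second step is to convert the operator-norm bound $\|d^s f_n\| \le M+1$ into a statement that survives this coefficientwise limit. The standard device: for any operator $T$ on $\ell^2(G)$, $\|T\| = \sup\{ |\langle T\xi, \eta\rangle| : \xi, \eta \in \ell^2(G), \|\xi\|, \|\eta\| \le 1\}$, and it suffices to take the supremum over the dense set of finitely supported $\xi, \eta$. Fix such $\xi, \eta$ with finite support. Then $\langle d^s f_n \cdot \xi, \eta\rangle$ is a finite sum $\sum_{x} \ell(x)^s f_n(x) \langle \lambda(x)\xi, \eta\rangle$ over the finite set $\{x : x \cdot \mathrm{supp}(\xi) \cap \mathrm{supp}(\eta) \ne \emptyset\}$, which does not depend on $n$. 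Since $f_n(x) \to f(x)$ for each of the finitely many $x$ involved, $\langle d^s f_n \cdot \xi, \eta\rangle \to \langle d^s f \cdot \xi, \eta\rangle$. Hence $|\langle d^s f \cdot \xi, \eta\rangle| = \lim_n |\langle d^s f_n \cdot \xi, \eta\rangle| \le \limsup_n \|d^s f_n\| \le M$. Taking the supremum over all finitely supported $\xi, \eta$ of norm $\le 1$ gives $\|d^s f\|_{B(\ell^2(G))} \le M$, which is exactly $L_s(f) \le \liminf_n L_s(f_n)$.

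One small point to be careful about: the seminorm $L_s$ is allowed to take the value $\infty$, and lower semicontinuity is automatic at points where the $\liminf$ is infinite, so the only content is the finite case handled above; also one should note $d^s f$ is again finitely supported when $f \in \mathbb{C}[G]$, so $\lambda(d^s f) \in B(\ell^2(G))$ makes sense and the pairing with finitely supported vectors is a genuine finite sum. I do not anticipate a real obstacle here — the argument is the familiar ``weak-type'' lower semicontinuity of operator norm combined with the continuity of the coefficient functionals on $C^*_r(G)$ — but the one spot that requires the $C^*_r(G)$ (as opposed to a weaker norm) hypothesis is precisely the claim that each coefficient functional $f \mapsto f(x)$ is bounded, which is what makes $f_n \to f$ in norm force coefficientwise convergence.
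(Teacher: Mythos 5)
Your proof is correct and follows essentially the same route as the paper's: the paper compresses $\lambda(d^sf_j)$ by projections $P_F$ onto finite subsets and uses $\|T\|=\sup_F\|P_FTP_F\|$, which is the same weak-type lower semicontinuity of the operator norm that you implement via pairings with finitely supported vectors, combined with the same observation that norm convergence in $C^*_r(G)$ forces coefficientwise convergence. Your write-up just makes explicit the coefficient-convergence step that the paper leaves implicit.
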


\begin{pf}
    \rm{
    Assume $\{f_j\}_j$ is a sequence in $ \mathbb{C}[G] $ that converges to $f \in \mathbb{C}[G]$ with respect to the operator norm and $L_s(f_j)\leq 1$ for all $j$. For any subset $F \subset G$, let $P_F\in B(\ell^2(G))$ be the projection onto the space $\ell^2(F)\subset \ell^2(G)$. Then for any finite subset $F \subset G$, we have
    \begin{align*}
        \left\|P_F (\lambda(\sum_x \ell(x)^sf(x)\delta_x))P_F \right\|_{B(\ell^2(G))} =\lim_j \left\|P_F \lambda(d^sf_j) P_F\right\|_{B(\ell^2(G))} \leq 1
    \end{align*}
    Since the operator norm of $ \lambda(\sum_x \ell(x)^sf(x)\delta_x) $ is the supremum of the left hand side of the above formula for all finite subset $F \subset G$, we have shown that $L_s(f)\leq 1$. \qed

    }
\end{pf}

We need to show that the seminorm defined above is a Lip-norm. The next Lemma is an analogue of Theorem 2.6 of \cite{antonescu2004metrics} for our seminorms and the same proof works for our case. First we recall the definition of rapid decay property of descrete groups, which appears in the assumption of the next lemma.

\begin{dfn}[\cite{valette2002introduction} Chapter 8]
\rm{
    Let $G$ be a finitely generated group with the length function $\ell$. For $s>0$, we define a weighted $\ell^2$-norm by
    \begin{align*}
        \|f\|_{H^s}:=\left (\sum_{x \in G} |f(x)|^2(1+\ell(x))^{2s} \right )^{\frac{1}{2}}.
    \end{align*}
    for $f \in \mathbb{C}[G]$.
    We say that the group $G$ has rapid decay property if there exist $C>0$ and $s>0$ such that
    \begin{align*}
        \|\lambda(f)\|_{C^*_r(G)} \leq C\|f\|_{H^s}
    \end{align*}
    for all $f \in \mathbb{C}[G]$.
    }
\end{dfn}

\begin{lem}
\rm{
    If $G$ is a group with rapid decay property, then there exists a positive integer $s_0$ such that $L_s$ is a Lip-norm on $\mathbb{C}[G]$ for every $s \geq s_0$.
    }
\end{lem}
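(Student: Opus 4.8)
The plan is to verify the four conditions of Definition 2.1 for $L_s$, with the crucial one being that $d^{L_s}$ metrizes the weak-$*$ topology on $S(\mathbb{C}[G])$. The symmetry condition $L_s(f)=L_s(f^*)$ is immediate since $d$ is a positive multiplier and taking adjoints reverses the order of matrix entries without changing the length weights (using $\ell(x^{-1})=\ell(x)$), and $L_s(f)=0$ forces $\ell(x)^sf(x)=0$ for all $x$, hence $f$ is supported at $e$, i.e. $f\in\mathbb{C}e$. Density of the domain is automatic because $L_s(f)<\infty$ for every $f\in\mathbb{C}[G]$ (finitely supported elements), and $\mathbb{C}[G]$ is dense in $C^*_r(G)$. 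Lower semicontinuity is exactly Lemma 3.1. So everything reduces to the topological condition.

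For the weak-$*$ topology condition, I would follow the standard Rieffel criterion (as in Theorem 2.6 of \cite{antonescu2004metrics}): it suffices to show that the set $\{f\in\mathbb{C}[G]: L_s(f)\leq 1,\ \tau(f)=0\}$ is totally bounded in the reduced $C^*$-norm, where $\tau$ is the canonical trace $f\mapsto f(e)$ (a fixed state we use to ``center'' elements). The rapid decay hypothesis enters here: if $L_s(f)\leq 1$ then in particular $\|d^sf\|_{B(\ell^2(G))}\geq \|d^sf\|_{\ell^2}$ applied to $\delta_e$ gives control $\left(\sum_x\ell(x)^{2s}|f(x)|^2\right)^{1/2}\leq 1$, and combined with $\tau(f)=f(e)=0$ this bounds the weighted Sobolev norm $\|f\|_{H^{s}}$ up to a constant depending only on how fast $\ell$ grows — more carefully, one splits $f$ into its restriction to a large ball $B_R$ and its tail, bounds the tail in $H^{s'}$ for $s'<s$ by $R^{-(s-s')}$ times $\sum\ell(x)^{2s}|f(x)|^2$, and rapid decay turns the $H^{s'}$-bound into a $C^*_r$-bound for the tail, while the ball part lives in a finite-dimensional space and is norm-bounded, hence totally bounded. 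This shows the centered unit $L_s$-ball is totally bounded in $C^*_r(G)$, which by Rieffel's theorem is equivalent to $d^{L_s}$ inducing the weak-$*$ topology, provided also that $L_s$-bounded sets are bounded in norm modulo scalars — which is the same total-boundedness estimate.

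The main obstacle is the bookkeeping in the tail estimate: one needs $s$ large enough that $2s$ exceeds the rapid-decay exponent by at least some positive margin, so that decomposing $f = P_{B_R}f + (1-P_{B_R})f$ yields a geometrically decaying bound on the tail in the relevant Sobolev norm. Concretely, if rapid decay holds with exponent $s_1$, then for $s \geq s_0 := s_1 + 1$ (any strictly larger integer works) one gets $\|(1-P_{B_R})f\|_{H^{s_1}}^2 = \sum_{\ell(x)>R}(1+\ell(x))^{2s_1}|f(x)|^2 \leq (1+R)^{-2(s-s_1)}\sum_x\ell(x)^{2s}|f(x)|^2 \cdot (\text{const}) \to 0$ uniformly over the unit $L_s$-ball as $R\to\infty$. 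Feeding this into the rapid decay inequality $\|\lambda((1-P_{B_R})f)\|_{C^*_r} \leq C\|(1-P_{B_R})f\|_{H^{s_1}}$ gives a uniform $C^*$-small tail, so the centered unit ball is within $\varepsilon$ of the finite-dimensional (hence totally bounded) set of ball-supported elements of norm at most some fixed constant. That constant itself must be extracted from the estimate $\|\lambda(f)\|_{C^*_r}\leq \|\lambda(P_{B_R}f)\|_{C^*_r} + \|\lambda((1-P_{B_R})f)\|_{C^*_r} \leq C\|P_{B_R}f\|_{H^{s_1}} + \varepsilon \leq C(1+R)^{s_1} + \varepsilon$, using the crude bound $\|P_{B_R}f\|_{H^{s_1}} \leq (1+R)^{s_1}\|f\|_{\ell^2} \leq (1+R)^{s_1}$. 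I would then cite Proposition 1.3 of \cite{rieffel1999metrics} (or its reformulation in \cite{antonescu2004metrics}) to conclude that total boundedness of the centered $L_s$-ball is equivalent to the metric condition, completing the proof.
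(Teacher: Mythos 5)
Your proposal is correct and follows essentially the same route as the paper: reduce to total boundedness of the centered unit $L_s$-ball $\{f:\ L_s(f)\leq 1,\ f(e)=0\}$ (the paper cites Proposition 1.3 of Ozawa--Rieffel for this criterion), control $\sum_x \ell(x)^{2s}|f(x)|^2$ by evaluating $\lambda(d^sf)$ at $\delta_e$, and split into a finite-dimensional ball part plus a tail whose weighted Sobolev norm decays like $N^{-2(s-s_0)}$, converted to a $C^*_r$-bound by rapid decay. The choice $s\geq s_0+1$ and the bookkeeping with the $(1+\ell(x))$ versus $\ell(x)$ weights match the paper's argument.
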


\begin{pf}
\rm{
    We show the third condition in Definition 2.1. Since $G$ satisfies the Haagerup inequality, there exist constants $C$ and $s_0$ such that
    \begin{align*}
        \|\lambda(f)\|^2_{B(\ell^2(G))} \leq C \sum_x (1+\ell(x))^{2s_0} |f(x)|^2
    \end{align*}
    By Proposition 1.3 of \cite{ozawa2005hyperbolic}, it suffices to show that the set
    \begin{align*}
        \mathcal{D}_s:=\{f \in \mathbb{C}[G]: \text{ }L_s(f)\leq 1 \text{, } f(e)=0\}
    \end{align*}
    is totally bounded with respect to the operator norm for sufficiently large $s$ depending on $G$. Let $\epsilon>0$ be given.
    We show the following claim
    \begin{claim}
    \rm{
        For any $s \geq s_0+1$, there exists $N$ such that for any $f \in \mathcal{D}_s$ we have
    \begin{align*}
        \left \| \lambda\left (\sum_{\ell(x)\geq N} f(x)\delta_x\right ) \right \|_{B(\ell^2(G))} \leq \epsilon .
    \end{align*}
    }
    \end{claim}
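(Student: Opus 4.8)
The plan is to derive the claim directly from the Haagerup inequality recalled at the start of the proof, together with one elementary observation: the Lip-norm $L_s$ controls the weighted $\ell^2$-norm of the symbol. Indeed, evaluating $\lambda(d^sf)$ on $\delta_e$ gives
\[
L_s(f)=\|\lambda(d^sf)\|_{B(\ell^2(G))}\geq \|\lambda(d^sf)\delta_e\|_{\ell^2(G)}=\Big(\sum_{x\in G}\ell(x)^{2s}|f(x)|^2\Big)^{1/2},
\]
so every $f\in\mathcal{D}_s$ satisfies $\sum_{x}\ell(x)^{2s}|f(x)|^2\leq 1$.

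First I would apply the Haagerup inequality to the tail element $g_N:=\sum_{\ell(x)\geq N}f(x)\delta_x$, obtaining
\[
\|\lambda(g_N)\|_{B(\ell^2(G))}^2\leq C\sum_{\ell(x)\geq N}(1+\ell(x))^{2s_0}|f(x)|^2 .
\]
On the range of summation one has $\ell(x)\geq N\geq 1$ (the term $x=e$ is absent since $f(e)=0$, so no length-zero term occurs), hence
\[
(1+\ell(x))^{2s_0}\leq 2^{2s_0}\ell(x)^{2s_0}=2^{2s_0}\,\ell(x)^{2s}\,\ell(x)^{-2(s-s_0)}\leq \frac{2^{2s_0}}{N^{2(s-s_0)}}\,\ell(x)^{2s},
\]
where we used $s-s_0\geq 1>0$. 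Summing and invoking the bound on $\sum_x\ell(x)^{2s}|f(x)|^2$ yields $\|\lambda(g_N)\|_{B(\ell^2(G))}^2\leq C\,2^{2s_0}N^{-2(s-s_0)}$ uniformly over $f\in\mathcal{D}_s$, and choosing $N$ so large that $C\,2^{2s_0}N^{-2(s-s_0)}\leq\epsilon^2$ proves the claim.

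I do not expect a genuine obstacle here: the whole argument is a two-line weight comparison, and the only points to watch are that the hypothesis $s\geq s_0+1$ is exactly what forces the tail to decay as a positive power of $N$, and that the normalization $f(e)=0$ is what makes the division by $\ell(x)^{2s}$ legitimate. Once the claim is established, the rest of the proof of Lemma 3.2 is routine: the complementary ``head'' $\sum_{\ell(x)<N}f(x)\delta_x$ lies in the fixed finite-dimensional space spanned by $\{\delta_x:\ell(x)<N\}$, on which $L_s$ is an honest norm bounded by $1$, hence describes a precompact set; combined with the claim this shows $\mathcal{D}_s$ is totally bounded in operator norm, and Proposition 1.3 of \cite{ozawa2005hyperbolic} then gives that $L_s$ is a Lip-norm.
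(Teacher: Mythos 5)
Your proposal is correct and follows essentially the same route as the paper: apply the Haagerup inequality to the tail $\sum_{\ell(x)\geq N}f(x)\delta_x$, compare the weight $(1+\ell(x))^{2s_0}$ with $\ell(x)^{2s}$ on the region $\ell(x)\geq N$ to extract the factor $N^{-2(s-s_0)}$, and bound $\sum_x\ell(x)^{2s}|f(x)|^2$ by $L_s(f)^2\leq 1$ via evaluation of $\lambda(d^sf)$ at $\delta_e$. The only differences are cosmetic constants (your $2^{2s_0}$ is in fact the cleaner bound).
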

    \begin{proof}[Proof of Claim]
         For $f \in \mathcal{D}_s$, we have
    \begin{align*}
        \left \| \lambda\left (\sum_{\ell(x)\geq N} f(x)\delta_x \right) \right \|^2_{B(\ell^2(G))}&\leq C\sum _{\ell(x)\geq N} (1+\ell(x))^{2s_{0}} |f(x)|^2 \\
        & \leq C\cdot 2^{s_0} \sum _{\ell(x)\geq N} \ell(x)^{2s_0} |f(x)|^2 \\
        & =  C\cdot 2^{s_0} \sum _{\ell(x)\geq N} \ell(x)^{2s_0-2s} \ell(x)^{2s} |f(x)|^2 \\
        & \leq C\cdot 2^{s_0} \cdot \frac{1}{N^{2(s-s_0)}} \sum _{\ell(x)\geq N} \ell(x)^{2s} |f(x)|^2 \\
         & \leq C\cdot 2^{s_0} \cdot \frac{1}{N^{2(s-s_0)}} \sum _{x \in G} \ell(x)^{2s} |f(x)|^2 \\
        & \leq C\cdot 2^s \cdot \frac{1}{N^{2(s-s_0)}} \left \| \lambda\left (d^s f\right ) (\delta_e) \right \|_{\ell^2(G)}\\
        & \leq C\cdot 2^s \frac{1}{N^{2(s-s_0)}} \left \| \lambda\left (d^s f\right )  \right \|_{B(\ell^2(G))}\\
        & \leq \frac{ C\cdot 2^s }{N^{2(s-s_0)}}.
    \end{align*}
    Since $2s-2s_0\geq 0$, by taking $N$ such that $ \frac{ C\cdot 2^s }{N^{2(s-s_0)}}<\epsilon $, the claim follows. 
    \end{proof}
   Since $G$ is finitely generated $\{x\in G: \text{ }\ell(x)<N\}$ is a finite set so
    \begin{align*}
        \{f \in \mathcal{D}_s: \text{ }\text{supp}(f)\subset B_N\}
    \end{align*}
    is totally bounded. Therefore, $\mathcal{D}_s$ is totally bounded.
    \qed
    }
\end{pf}

Since the kernel of $L_{s,\Lambda}$ is only scalar and the truncated operator systems $ P_{\Lambda}\mathbb{C}[G]P_{\Lambda} $ is finite dimensional, we have the following.

\begin{lem}
\rm{
    For every finitely generated group $G$ and for every positive integer $s$, the seminorm $ L_{s,\Lambda} $ is a Lip-norm for $ P_{\Lambda}\mathbb{C}[G]P_{\Lambda} $.
    }
\end{lem}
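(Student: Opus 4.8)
The claim to establish is that $L_{s,\Lambda}$ is a Lip-norm on the finite-dimensional operator system $P_{\Lambda}\mathbb{C}[G]P_{\Lambda}$. The plan is to check the four conditions of Definition~2.1 one at a time, using that finite-dimensionality trivializes most of the analytic difficulties. First I would verify the $*$-invariance and the kernel condition: for $T=(a_{xy^{-1}})_{x,y\in B_{\Lambda}}$, the adjoint has matrix coefficients $\overline{a_{yx^{-1}}}$, and since $\ell(xy^{-1})=\ell((yx^{-1})^{-1})=\ell(yx^{-1})$ (the generating set is symmetric), applying $d_{\Lambda}$ commutes with taking adjoints, so $L_{s,\Lambda}(T)=L_{s,\Lambda}(T^*)$. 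For the kernel, $L_{s,\Lambda}(T)=0$ means $\ell(xy^{-1})^s a_{xy^{-1}}=0$ for all $x,y\in B_{\Lambda}$; since $\ell(z)>0$ whenever $z\neq e$, this forces $a_z=0$ for all $z\in B_{2\Lambda}\setminus\{e\}$, i.e. $T$ is a scalar multiple of the identity — this is exactly the sentence preceding the lemma.

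Next, the density condition is automatic: $L_{s,\Lambda}$ is everywhere finite on the finite-dimensional space $P_{\Lambda}\mathbb{C}[G]P_{\Lambda}$, so $\{T: L_{s,\Lambda}(T)<\infty\}$ is the whole space. Lower semicontinuity with respect to the operator norm is likewise immediate: $d_{\Lambda}$ is a linear map on a finite-dimensional space, hence continuous, and $T\mapsto\|d_{\Lambda}^s T\|$ is therefore continuous, a fortiori lower semicontinuous. The only condition requiring genuine argument is that the metric $d^{L_{s,\Lambda}}$ induces the weak-$*$ topology on the state space $S(P_{\Lambda}\mathbb{C}[G]P_{\Lambda})$. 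Here I would invoke the standard criterion (Rieffel, or Proposition~1.3 of \cite{ozawa2005hyperbolic} as already cited in the proof of Lemma~3.2): since the kernel of $L_{s,\Lambda}$ is exactly $\mathbb{C}e$, it suffices to show that $\{T\in P_{\Lambda}\mathbb{C}[G]P_{\Lambda}: L_{s,\Lambda}(T)\leq 1,\ \tau(T)=0\}$ is totally bounded in operator norm, where $\tau$ is any fixed faithful state (say the normalized trace restricted to $P_{\Lambda}\ell^2(G)$, or the vector state at $\delta_e$). But this set lies in a finite-dimensional space, and it is norm-bounded because $L_{s,\Lambda}$ is a norm on the complement of $\mathbb{C}e$ (any two norms on a finite-dimensional space are equivalent, so $\|T\|\leq C\,L_{s,\Lambda}(T)$ on $\{\tau(T)=0\}$), hence it is relatively compact.

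I do not expect a real obstacle here — the content of the lemma is that the pathology of Remark~3.1 (the commutator seminorm having a kernel larger than $\mathbb{C}e$) does not occur for $L_{s,\Lambda}$, and once the kernel is pinned down to scalars, finite-dimensionality delivers everything else for free. The one point to be careful about is the choice of the reference state used in the totally-bounded criterion and making sure it is a state on the operator system (not just on a $C^*$-algebra); the vector functional $T\mapsto\langle T\delta_e,\delta_e\rangle$ works and is unital and positive on $P_{\Lambda}\mathbb{C}[G]P_{\Lambda}$. So the proof is essentially: kernel $=\mathbb{C}e$, plus "finite-dimensional normed space modulo a line $\Rightarrow$ compact unit ball $\Rightarrow$ Lip-norm."
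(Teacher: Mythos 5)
Your argument is correct and is exactly the one the paper intends: the paper offers no separate proof of this lemma, only the one-sentence justification that the kernel of $L_{s,\Lambda}$ is $\mathbb{C}e$ and the operator system is finite dimensional, and your write-up simply fills in the routine verifications (adjoint-invariance via $\ell(z)=\ell(z^{-1})$, kernel $=\mathbb{C}e$ since every $z\in B_{2\Lambda}\setminus\{e\}$ factors as $xy^{-1}$ with $x,y\in B_{\Lambda}$, and total boundedness of the Lip-ball modulo scalars from equivalence of norms in finite dimensions). No gaps.
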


In the last section, we work on groups with polynomial growth. Here we remark a theorem on an asymptotic estimate of the volume of boundary by R.Tessera.
\begin{thm}[R. Tessera \cite{tessera2007volume}]
\rm{
    Let $G$ be a compactly generated group with polynomial growth with Haar measure $\mu$. Then there exist a constant $C$ and $\beta>0$ such that
    \begin{align*}
        \frac{\mu( B_{\Lambda+1}\setminus B_{\Lambda})}{\mu (B_{\Lambda})} \leq C\Lambda^{-\beta}.
    \end{align*}
    }
\end{thm}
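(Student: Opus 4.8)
The plan is to deduce the stated estimate from the general principle that in a doubling geodesic space spheres are negligible compared to balls, specialized to the Cayley graph of $G$. First I would record the two ingredients that a polynomial growth group supplies. By the Guivarc'h--Bass theorem the volume function satisfies a two-sided bound $c_1\Lambda^{d}\le \mu(B_\Lambda)\le c_2\Lambda^{d}$ for an integer $d$ (the homogeneous dimension), which in particular gives the doubling property $\mu(B_{2\Lambda})\le C_D\,\mu(B_\Lambda)$; and the word metric makes the Cayley graph a geodesic metric space, so every $x$ with $\ell(x)=n$ admits, for each $0\le m\le n$, a point at distance $m$ from $e$ and distance $n-m$ from $x$ (a prefix of a geodesic word). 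I emphasise that the two-sided bound alone is not enough: it only shows that $\mu(B_{\Lambda+1})/\mu(B_\Lambda)$ is bounded, so the decay $\Lambda^{-\beta}$ must come from a genuinely geometric contraction across scales.

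Writing $\mathrm{shell}(r,w):=\mu(B_r)-\mu(B_{r-w})$ for the outer annulus of width $w$, the theorem is the case $w=1$, $r=\Lambda+1$ of the scale-invariant estimate $\mathrm{shell}(r,w)\le C\,(w/r)^{\beta}\mu(B_r)$. The heart of the argument is a single contraction lemma: there is $C_0>0$, independent of scale, such that an outer shell is controlled by the adjacent inner shell of the same width,
\begin{align*}
\mu(B_r)-\mu(B_{r-w}) \;\le\; C_0\bigl(\mu(B_{r-w})-\mu(B_{r-2w})\bigr),
\end{align*}
whenever $w\le r/2$. Granting this, rearranging gives $\mathrm{shell}(r,w)\le \theta\,\mathrm{shell}(r,2w)$ with $\theta=C_0/(1+C_0)<1$, and iterating $k\approx\log_2(r/w)$ times to widen the shell up to the whole ball yields $\mathrm{shell}(r,w)\le \theta^{k}\mathrm{shell}(r,r)\le \theta^{\log_2(r/w)}\mu(B_r)=(w/r)^{\beta}\mu(B_r)$ with $\beta=\log_2(1/\theta)>0$. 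Setting $w=1$, $r=\Lambda+1$ and absorbing $\mu(B_{\Lambda+1})\le C_D\,\mu(B_\Lambda)$ produces the claimed bound $\mu(B_{\Lambda+1}\setminus B_\Lambda)/\mu(B_\Lambda)\le C\Lambda^{-\beta}$.

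For the contraction lemma I would argue by a covering/packing comparison. Choosing a maximal $w$-separated subset of the outer shell, the balls of radius $w/2$ about these points are disjoint and lie in $B_{r+w/2}$, while the balls of radius $w$ about them cover the outer shell; doubling makes all these ball volumes comparable, so $\mathrm{shell}(r,w)$ is comparable to (number of centres)$\times\mu(B_w)$. Using the geodesic structure I would then push each centre a distance $w$ toward $e$ along a geodesic, landing in the inner shell $B_{r-w}\setminus B_{r-2w}$, and estimate that shell's volume from below by balls of a fixed radius about the pushed centres. The decay $\theta<1$ is exactly the statement that this inward push cannot increase the packing count.

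The main obstacle is precisely this last point: the inward push need not be injective and geodesics emanating from a common basepoint can converge, so the pushed centres may cluster and fail to remain separated. Controlling the multiplicity of this clustering---showing that at most boundedly many outer centres are pushed into the same $w$-ball of the inner shell---is the technical crux, and it is here that the doubling constant $C_D$ enters quantitatively and fixes the admissible exponent $\beta$. Once the multiplicity is bounded, the comparison of packing numbers gives the contraction with an explicit $C_0=C_0(C_D)$, and hence an explicit $\beta=\beta(d)$. For the locally compact compactly generated case the same scheme applies after the Gromov--Losert structure theory replaces the Cayley graph by a doubling left-invariant geodesic metric on the associated Lie group carrying Haar measure.
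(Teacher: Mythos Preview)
The paper does not prove this theorem: it is stated with attribution to Tessera and a citation, and is invoked later (in the proof of Lemma~5.2) as a black box. There is therefore no argument in the paper to compare yours against.

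That said, your strategy is correct and is essentially one of the standard routes to this inequality (and close in spirit to Tessera's own argument for doubling spaces with a monotone--geodesic property). The contraction--plus--iteration scheme is right: from $\mathrm{shell}(r,w)\le C_0\,\mathrm{shell}(r-w,w)$ one gets $\mathrm{shell}(r,w)\le \theta\,\mathrm{shell}(r,2w)$ with $\theta=C_0/(1+C_0)<1$, and $\lfloor\log_2(r/w)\rfloor$ iterations give the exponent $\beta=\log_2(1/\theta)$. The multiplicity obstacle you single out is genuinely the only nontrivial point, and it is disposed of exactly by doubling, as you anticipate. Concretely: if two pushed centres $y_i,y_j$ lie in a common ball of radius $w$, then $d(x_i,x_j)\le d(x_i,y_i)+d(y_i,y_j)+d(y_j,x_j)\le 3w$; since the $x_i$ are $w$-separated, the disjoint balls $B(x_i,w/2)$ all sit inside a single ball of radius $7w/2$, and left-invariance of Haar measure together with doubling bounds their number by $\mu(B_{7w/2})/\mu(B_{w/2})\le C_D^{3}$. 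So the multiplicity is at most a fixed power of $C_D$, and the contraction constant $C_0$ is explicit in $C_D$.

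One cosmetic point to tidy when you write this out: the disjoint $w/2$-balls about a separated sub-family of pushed centres are not contained in the inner annulus $B_{r-w}\setminus B_{r-2w}$ itself but only in a slight fattening of it, so the literal inequality $\mathrm{shell}(r,w)\le C_0\,\mathrm{shell}(r-w,w)$ holds only after a harmless adjustment of widths (e.g.\ comparing a width-$w$ outer shell to a width-$2w$ inner shell). This does not affect the iteration once constants are tracked. Your remark that the two-sided Bass--Guivarc'h bound alone is insufficient, and that the geodesic structure is doing real work, is exactly right.
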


\section{Noncommutative Fejer kernel}

In this section, we develop a theory of noncommutative Fejer kernel for discrete groups to prepare for some estimates we need in the last section.  For a subset $F \subset G$, its cardinality is denoted by $\#F$. For positive number $\Lambda$, we define a finitely supported function $F_{\Lambda}\in \mathbb{C}[G]$ by
\begin{align*}
    F_{\Lambda}(x):=\frac{\#(B_{\Lambda}\cap B_{\Lambda}(x))}{\#B_{\Lambda}}.
\end{align*}
This gives a pointwise multiplication on the group algebra $\mathbb{C}[G]$, which is denoted by 
\begin{align*}
    \mathcal{F}_{\Lambda} : \mathbb{C}[G] \rightarrow \mathbb{C}[G] ; \text{ } \sum f(x)\delta_x \rightarrow  \sum F_{\Lambda}(x)f(x)\delta_x.
\end{align*}

We regard $\mathcal{F}_{\Lambda}$ as a noncommutative analogue of Fejer Kernel introduced for $G=\mathbb{Z}$ to approximate the delta function by Laurent polynomials on $S^1$ with finite degrees. In particular, it has the following approximation property (Lemma 10 of \cite{van2021gromov});\\
"there exists a sequence $\{\epsilon_{\Lambda}\}_{\Lambda}$ which converges to $0$ as $\Lambda$ goes to infinity such that
\begin{align*}
    \|f-\mathcal{F}_{\Lambda}(f)\|_{\infty}\leq \epsilon_{\Lambda}\|f'\|_{\infty}
\end{align*}
for all $f\in C^{\infty}(S^1)\subset C(S^1)\cong C^*_r(\mathbb{Z})$."

But in \cite{leimbach2023gromov}, it is pointed out that this approximation property can not be generalized to high dimensional torus. But if we use a high order derivative, we can show this approximaton holds for all group with polynomial growth.
Here we remark that for finitely generated discrete groups, having polynomial growth is equivalent to being amenable and having rapid decay property (\cite{valette2002introduction} Chapter 8).

Before stating the main estimate of this section, we need the following lemma;
\begin{lem}
\rm{
    Let $G$ be a discrete group generated by a finite symmetric subset $S$. If the sequence $\{B_{\Lambda}\}_{\Lambda}$ is a F{\o}lner sequence then there exists a sequence $\{\epsilon_{\Lambda}\}_{\Lambda}$ of positive numbers which converges to $0$ as $\Lambda$ goes to infinity such that
    \begin{align*}
        \|f - \mathcal{F}_{\Lambda}(f)\|_{\ell^2(G)}\leq \epsilon_{\Lambda} \|Df\|_{\ell^2(G)}.
    \end{align*}
    }
\end{lem}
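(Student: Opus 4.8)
The plan is to estimate the $\ell^2$-difference $\|f - \mathcal{F}_\Lambda(f)\|_{\ell^2(G)}^2 = \sum_{x\in G}|1 - F_\Lambda(x)|^2|f(x)|^2$ by bounding $1 - F_\Lambda(x)$ in terms of $\ell(x)$ and a F\o{}lner-type quantity. First I would observe that $F_\Lambda(e) = 1$ and, more generally, that
\begin{align*}
1 - F_\Lambda(x) = \frac{\#B_\Lambda - \#(B_\Lambda \cap B_\Lambda(x))}{\#B_\Lambda} = \frac{\#(B_\Lambda \setminus B_\Lambda(x))}{\#B_\Lambda},
\end{align*}
which is nonnegative and at most $1$. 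The key geometric point is that if $\ell(x) = n$, then $B_{\Lambda - n} \subset B_\Lambda \cap B_\Lambda(x)$ (because $z \in B_{\Lambda-n}$ implies $d(z,x) \le d(z,e) + \ell(x) \le \Lambda$), so $\#(B_\Lambda \setminus B_\Lambda(x)) \le \#B_\Lambda - \#B_{\Lambda - n} = \#(B_\Lambda \setminus B_{\Lambda-n})$. Hence
\begin{align*}
0 \le 1 - F_\Lambda(x) \le \frac{\#(B_\Lambda \setminus B_{\Lambda - \ell(x)})}{\#B_\Lambda} = \sum_{k=1}^{\ell(x)} \frac{\#(B_{\Lambda - k + 1} \setminus B_{\Lambda - k})}{\#B_\Lambda}.
\end{align*}

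Next I would use the F\o{}lner property to control these shell ratios. Since $\{B_\Lambda\}_\Lambda$ is F\o{}lner, $\#(B_{\Lambda} \setminus B_{\Lambda - 1})/\#B_\Lambda = \#(\partial B_\Lambda)/\#B_\Lambda \to 0$; but I need the stronger fact that each of the terms $\#(B_{\Lambda-k+1}\setminus B_{\Lambda-k})/\#B_\Lambda$ for $1 \le k \le \ell(x)$ is small, uniformly, so that the sum is bounded by something like $\ell(x)\cdot \eta_\Lambda$ with $\eta_\Lambda \to 0$. This is where I expect the main subtlety: a priori the F\o{}lner condition only gives smallness of a single boundary shell, and one must rule out the possibility that interior shells $B_{\Lambda-k+1}\setminus B_{\Lambda-k}$ are much larger than $B_\Lambda$ itself would suggest. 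One clean way around this is to fix a slowly growing cutoff: split $G$ into $\{\ell(x) \le m_\Lambda\}$ and $\{\ell(x) > m_\Lambda\}$, where $m_\Lambda \to \infty$ with $m_\Lambda / \Lambda \to 0$. On the first piece, $1 - F_\Lambda(x) \le \#(B_\Lambda \setminus B_{\Lambda - m_\Lambda})/\#B_\Lambda =: \delta_\Lambda$, and one needs $\delta_\Lambda \to 0$; on the second piece one uses $|1 - F_\Lambda(x)| \le 1 \le \ell(x)/m_\Lambda$ directly.

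Putting this together, on the first piece $\sum_{\ell(x)\le m_\Lambda}|1-F_\Lambda(x)|^2|f(x)|^2 \le \delta_\Lambda^2 \|f\|_{\ell^2}^2 \le \delta_\Lambda^2 \|Df\|_{\ell^2}^2$ provided $f(e)$ does not interfere — actually we should be slightly careful: $\|f\|_{\ell^2} \le \|Df\|_{\ell^2}$ fails because of the $\ell(e) = 0$ term, so it is better to note $1 - F_\Lambda(e) = 0$ and restrict the first sum to $1 \le \ell(x) \le m_\Lambda$, where $|f(x)| \le \ell(x)|f(x)| = |(Df)(x)|$, giving the bound $\delta_\Lambda^2 \|Df\|_{\ell^2}^2$. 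On the second piece, $\sum_{\ell(x) > m_\Lambda}|1-F_\Lambda(x)|^2|f(x)|^2 \le \sum_{\ell(x) > m_\Lambda} \frac{\ell(x)^2}{m_\Lambda^2}|f(x)|^2 \le \frac{1}{m_\Lambda^2}\|Df\|_{\ell^2}^2$. Thus
\begin{align*}
\|f - \mathcal{F}_\Lambda(f)\|_{\ell^2(G)}^2 \le \left(\delta_\Lambda^2 + \frac{1}{m_\Lambda^2}\right)\|Df\|_{\ell^2(G)}^2,
\end{align*}
and setting $\epsilon_\Lambda := \sqrt{\delta_\Lambda^2 + m_\Lambda^{-2}}$ finishes the proof once we verify $\delta_\Lambda \to 0$. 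The remaining task is precisely this verification: choosing $m_\Lambda$ growing slowly enough that $\#(B_\Lambda \setminus B_{\Lambda-m_\Lambda})/\#B_\Lambda \to 0$. For a general F\o{}lner sequence of balls one can argue by contradiction or by a telescoping/averaging trick over $\Lambda$ in a dyadic range; in the polynomial growth case this is immediate from Tessera's estimate (Theorem 3.1), since $\#(B_\Lambda \setminus B_{\Lambda-m_\Lambda})/\#B_\Lambda \le \sum_{k=0}^{m_\Lambda - 1} C(\Lambda - k)^{-\beta} \le C\, m_\Lambda \,(\Lambda - m_\Lambda)^{-\beta}$, which tends to $0$ as long as $m_\Lambda = o(\Lambda^\beta \wedge \Lambda)$. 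I would present the F\o{}lner version as stated but remark that the quantitative polynomial-growth case is what is actually used downstream.
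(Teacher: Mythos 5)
Your argument is correct, but it routes the key pointwise estimate differently from the paper. Both proofs start from $1-F_{\Lambda}(x)=\#(B_{\Lambda}\setminus B_{\Lambda}(x))/\#B_{\Lambda}$ and both split the $\ell^2$-sum into words of small and large length, absorbing the large-length part via $1\leq \ell(x)/m$; the difference is how the F{\o}lner hypothesis is converted into a bound on $\#(B_{\Lambda}\setminus B_{\Lambda}(x))$. The paper telescopes along a geodesic word $x=x_1\cdots x_n$ inside the group, writing $B_{\Lambda}\setminus B_{\Lambda}(x)$ as a union of $\ell(x)$ sets each of which is a translate of a generator's F{\o}lner set; this yields the linear bound $1-F_{\Lambda}(x)\leq \ell(x)\,\epsilon_{\Lambda}$ with the \emph{same} $\epsilon_{\Lambda}$ for all $x$, after which the cutoff $\ell(x)^2\leq 1/\epsilon_{\Lambda}$ closes the argument with no further input. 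You instead telescope over concentric balls, using $B_{\Lambda-\ell(x)}\subset B_{\Lambda}\cap B_{\Lambda}(x)$, which replaces the group-theoretic decomposition by a purely metric one but leaves you with the shell ratios $\#(B_{j+1}\setminus B_j)/\#B_{\Lambda}$ at scales $j$ \emph{below} $\Lambda$, hence the need for the auxiliary cutoff $m_{\Lambda}$ and the verification that $\delta_{\Lambda}=\#(B_{\Lambda}\setminus B_{\Lambda-m_{\Lambda}})/\#B_{\Lambda}\to 0$. That verification, which you leave as a sketch, does go through: setting $\eta_j:=\#(B_{j+1}\setminus B_j)/\#B_j$ and $b_{\Lambda}:=\sup_{j\geq \Lambda/2}\eta_j$ (which tends to $0$ by the F{\o}lner property), one has $\delta_{\Lambda}\leq \sum_{j=\Lambda-m_{\Lambda}}^{\Lambda-1}\eta_j\leq m_{\Lambda}b_{\Lambda}$ whenever $m_{\Lambda}\leq \Lambda/2$, so $m_{\Lambda}:=\min\{\lfloor \Lambda/2\rfloor,\lceil b_{\Lambda}^{-1/2}\rceil\}$ works. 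So there is no gap, only an unwritten step; the trade-off is that the paper's geodesic decomposition gets the linear-in-$\ell(x)$ bound in one stroke and avoids the second parameter, while your concentric-ball version uses less of the group structure and, as you note, becomes fully quantitative under Tessera's polynomial-growth estimate.
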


\begin{pf}
\rm{
    Assume $\{B_{\Lambda}\}_{\Lambda}$ is a F{\o}lner sequence. Then there exists a sequence $\{\epsilon_{\Lambda}\}_{\Lambda}$ of positive numbers which converges to $0$ as $\Lambda$ goes to infinity such that
    \begin{align*}
        \frac{\#(B_{\Lambda}\setminus  B_{\Lambda}(x))}{\#B_{\Lambda}} \leq \epsilon_{\Lambda}
    \end{align*}
    for any $x \in S$.
    Therefore, since for any $x=x_1 x_2 \cdots x_n \in G$, 
    \begin{align*}
        B_{\Lambda}\setminus B_{\Lambda}(x) \subset (B_{\Lambda}\setminus B_{\Lambda}(x_n)) &\amalg (B_{\Lambda}(x_n)\setminus B_{\Lambda}(x_{n-1}x_{n})) \amalg \\\cdots &\amalg (B_{\Lambda}(x_2\cdots x_n)\setminus B_{\Lambda}(x)),
    \end{align*}
    we have
    \begin{equation}
    \label{boundary}
        \frac{\#(B_{\Lambda}\setminus B_{\Lambda}(x))}{\#B_{\Lambda}} \leq \ell(x)  \epsilon_{\Lambda}.
    \end{equation}
    So, for $f \in \mathbb{C}[G]$ we have the following estimate
    \begin{align*}
        &\|f-\mathcal{F}_{\Lambda}(f)\|_{\ell^2(G)}^2\\
        =& \sum_{\ell(x)^2\leq \frac{1}{{\epsilon_{\Lambda}}}}\left( 1- \frac{\#(B_{\Lambda}\cap B_{\Lambda}(x))}{\#B_{\Lambda}} \right)^2|f(x)|^2+ \sum_{\ell(x)^2 > \frac{1}{{\epsilon_{\Lambda}}}}\left( 1- \frac{\#(B_{\Lambda}\cap B_{\Lambda}(x))}{\#B_{\Lambda}} \right)^2|f(x)|^2\\
        \leq& \sum_{0< \ell(x)^2\leq \frac{1}{{\epsilon_{\Lambda}}}}\left( \sqrt{\frac{1}{\epsilon_{\Lambda}}} \cdot \epsilon_{\Lambda}\right)^2|f(x)|^2 +
        \sum_{\ell(x)^2 > \frac{1}{{\epsilon_{\Lambda}}}}|f(x)|^2  \\
        \leq& \epsilon_{\Lambda} \sum_{x \neq e} \ell(x)^2|f(x)|^2.
        \end{align*}
    \qed
    }
\end{pf}

\begin{thm}
\rm{
    If $G$ is a discrete group with polynomial growth, then there exists a positive integer $s$ and a sequence $\{\epsilon_{\Lambda}\}_{\Lambda}$ of positive numbers which converges to $0$ as $\Lambda$ goes to infinity such that 
    \begin{align*}
        \|f-\mathcal{F}_{\Lambda}(f)\|_{B(\ell^2(G))}\leq \epsilon_{\Lambda}  \|d^s(f)\|_{B(\ell^2(G))} 
    \end{align*}
    for any $f \in \mathbb{C}[G]$.
    }
\end{thm}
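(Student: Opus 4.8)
The plan is to bootstrap from the $\ell^2$-estimate of Lemma 4.2 to an operator-norm estimate, using the rapid decay (Haagerup) inequality to pass from the $B(\ell^2(G))$-norm down to a weighted $\ell^2$-norm, and then absorbing the resulting polynomial weights into a high enough power $d^s$. Concretely, recall that $G$ has polynomial growth, hence satisfies the Haagerup inequality: there are $C>0$ and $s_0$ with $\|\lambda(g)\|_{B(\ell^2(G))}^2 \le C\sum_x (1+\ell(x))^{2s_0}|g(x)|^2$ for all $g\in\mathbb{C}[G]$. Also, polynomial growth implies $\{B_\Lambda\}_\Lambda$ is a F\o lner sequence, so Lemma 4.2 applies and moreover one has a quantitative F\o lner rate from Tessera's Theorem 3.1.

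\emph{Step 1.} Apply the Haagerup inequality to $g=f-\mathcal{F}_\Lambda(f)$, whose $x$-coefficient is $\bigl(1-F_\Lambda(x)\bigr)f(x)$. This bounds $\|f-\mathcal{F}_\Lambda(f)\|_{B(\ell^2(G))}^2$ by $C\sum_x (1+\ell(x))^{2s_0}\bigl(1-F_\Lambda(x)\bigr)^2|f(x)|^2$. \emph{Step 2.} Split the sum at a threshold $\ell(x)^2\le 1/\sqrt{\epsilon_\Lambda}$ (or a similar cutoff), exactly as in the proof of Lemma 4.2: on the low-frequency part use $1-F_\Lambda(x)=\#(B_\Lambda\setminus B_\Lambda(x))/\#B_\Lambda \le \ell(x)\epsilon_\Lambda$ from inequality \eqref{boundary}, so the weight $(1+\ell(x))^{2s_0}(1-F_\Lambda(x))^2$ is dominated by something like $\ell(x)^{2s_0+2}\epsilon_\Lambda^2 \le \epsilon_\Lambda^{3/2}\cdot\ell(x)^{2s_0}$ after using the cutoff; on the high-frequency part use the trivial bound $(1-F_\Lambda(x))^2\le 1$ together with $\ell(x)^{2s_0}\ge$ (large) to trade the extra decay from $1/\ell(x)^{2k}$ against the large threshold. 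In both regimes one arrives at a bound of the form $\tilde\epsilon_\Lambda \sum_x \ell(x)^{2s}|f(x)|^2$ for a suitable integer $s$ (roughly $s\ge s_0+1$), with $\tilde\epsilon_\Lambda\to 0$. \emph{Step 3.} Finally observe $\sum_x \ell(x)^{2s}|f(x)|^2 = \|d^s f(\delta_e)\|_{\ell^2(G)}^2 \le \|d^s f\|_{B(\ell^2(G))}^2$, since $d^s f$ is the left convolution operator with symbol $\ell(\cdot)^s f(\cdot)$ and applying it to $\delta_e$ recovers that vector. Taking square roots and setting $\epsilon_\Lambda := C^{1/2}\tilde\epsilon_\Lambda^{1/2}$ gives the claim.

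\textbf{The main obstacle} is the bookkeeping in Step 2: one must choose the cutoff and the exponent $s$ so that \emph{both} the low-frequency and high-frequency contributions are simultaneously controlled by the same vanishing sequence $\epsilon_\Lambda$. The low-frequency part wants $s$ large enough that $2s_0+2-2s\le 0$ (so that $\ell(x)^{2s_0+2}\le \ell(x)^{2s}$ after we have spent the cutoff), while the high-frequency part wants the cutoff $N_\Lambda$ (say $N_\Lambda\sim \epsilon_\Lambda^{-1/4}$) large enough that $\ell(x)^{2s_0-2s}\le N_\Lambda^{2s_0-2s}\to 0$; these constraints are compatible once $s>s_0+1$. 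A secondary subtlety is that Lemma 4.2's rate $\epsilon_\Lambda$ is only controlled for $x\in S$ and then amplified by a factor $\ell(x)$ in \eqref{boundary}; this linear-in-$\ell(x)$ loss is exactly why a single derivative is not enough and why we need $d^s$ with $s\ge s_0+1\ge 2$. Tessera's Theorem 3.1 is what guarantees that a genuine polynomial rate $\epsilon_\Lambda$ exists at all, which is needed to make the cutoff argument quantitative.
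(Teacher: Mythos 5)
Your proposal is correct and follows essentially the same route as the paper: both arguments combine the F\o lner bound $1-F_\Lambda(x)\le \ell(x)\epsilon_\Lambda$ with a cutoff splitting, the Haagerup/rapid-decay inequality, and the observation $\sum_x\ell(x)^{2s}|f(x)|^2=\|\lambda(d^sf)\delta_e\|^2\le\|d^sf\|_{B(\ell^2(G))}^2$. The only (cosmetic) difference is the order of operations — the paper applies its $\ell^2$-Fej\'er lemma to $(1+D)^sf$ (using that $D$ commutes with $\mathcal{F}_\Lambda$) and then invokes rapid decay on the resulting Sobolev norm, whereas you apply rapid decay first and re-run the splitting with the weight $(1+\ell(x))^{2s_0}$ already in place; also, Tessera's quantitative rate is not actually needed here, since any $\epsilon_\Lambda\to 0$ from the F\o lner property suffices.
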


\begin{pf}
\rm{
    Since $G$ has polynomial growth, the sequence $\{B_{\Lambda}\}_{\Lambda}$ is a F{\o}lner sequence. By applying Lemma 4.1 for $(1+D)^sf$, for any positive integer $s$ (will be fixed later), there exists a sequence $\{\epsilon_{\Lambda}\}_{\Lambda}$ of positive numbers which converges to $0$ as $\Lambda$ goes to infinity such that 
    \begin{equation}
    \begin{split}
        \|(1+D)^{s}(f-\mathcal{F}_{\Lambda}(f))\|^2_{\ell^2(G)} & = \|(1+D)^sf-\mathcal{F}_{\Lambda}((1+D)^sf)\|^2_{\ell^2(G)}\\
        &\leq \epsilon_{\Lambda}^2 \|D(1+D)^{s}f\|^2_{\ell^2(G)}\\
        & = \epsilon_{\Lambda}^2 \sum_{x \neq e} \ell(x)^2(1+\ell(x))^{2s}|f(x)|^2 \\ \label{sobolev}
        &\leq 2^{2s}\epsilon_{\Lambda}^2 \sum_x \ell(x)^{2s+2}|f(x)|^2   \\
        & = 2^{2s}\epsilon_{\Lambda}^2 \|\lambda(d^{s+1}f)\delta_e\|_{\ell^2(G)}^2 \\
        & \leq 2^{2s}\epsilon_{\Lambda}^2 \|\lambda(d^{s+1}f)\|_{B(\ell^2(G))}^2.
    \end{split}
    \end{equation}
    Note that in the first equality, we use the fact $D$ and $\mathcal{F}_{\Lambda}$ commute since both of them are pointwise multiplication.
    Since $G$ satisfies the rapid decay property (Example 8.5 of \cite{valette2002introduction}), the left hand side dominates $\|f-\mathcal{F}_{\Lambda}(f)\|_{C^*_r(G)}$ up to some constant which is independent on $\Lambda$ and $f$ for sufficiently large $s$. This proves the theorem. 
    \qed
}    
\end{pf}

\section{Proof of the main theorem}
 In this section, we prove our main theorem.

 \begin{thm}
 \rm{
     If $G$ is a finitely generated group with polynomial growth, then there exists $s$ such that the truncated operator systems converges to the original algebra in terms of quantum Gromov-Hausdorff distance defined by $s$-Lip-norms; i.e.
     \begin{align*}
     \text{dist}_q((\mathbb{C}[G],L_s),(P_{\Lambda}\mathbb{C}[G]P_{\Lambda},L_{s,\Lambda}))\rightarrow 0.
 \end{align*}
 }
 \end{thm}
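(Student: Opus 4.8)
The plan is to verify the hypotheses of Lemma~2.1 for $(\mathbb{C}[G],L_s)$ and $(P_\Lambda\mathbb{C}[G]P_\Lambda,L_{s,\Lambda})$, with $s$ a large integer to be fixed at the end and $\epsilon=\epsilon_\Lambda\to 0$. For the map $\mathbb{C}[G]\to P_\Lambda\mathbb{C}[G]P_\Lambda$ I take the compression $r_\Lambda(f):=P_\Lambda\lambda(f)P_\Lambda$, which by \eqref{toeplitz} is the Toeplitz matrix with symbol $z\mapsto f(z)$, $z\in B_{2\Lambda}$; it is unital and positive, and since $d_\Lambda^s r_\Lambda(f)=P_\Lambda\lambda(d^sf)P_\Lambda$, compressing by a projection gives $L_{s,\Lambda}(r_\Lambda(f))\le L_s(f)$. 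For the map $P_\Lambda\mathbb{C}[G]P_\Lambda\to\mathbb{C}[G]$ I take the averaging map $s_\Lambda(A):=\tfrac{1}{\#B_\Lambda}\sum_{x,y\in B_\Lambda}A_{x,y}\,\lambda(xy^{-1})$, a finite sum in $\mathbb{C}[G]$; writing $s_\Lambda(A)=\tfrac{1}{\#B_\Lambda}\sum_{u\in G}V_u^{*}AV_u$ with $(V_u\eta)(y):=\eta(yu)$ for $y\in B_\Lambda$ exhibits it as a unital completely positive, hence operator-norm contractive, map into $C^*_r(G)$. A direct count, using that $\#(B_\Lambda\cap z^{-1}B_\Lambda)=F_\Lambda(z)\#B_\Lambda$, shows that on the Toeplitz matrix with symbol $(b_z)$ one has $s_\Lambda(A)=\lambda\bigl(\sum_z F_\Lambda(z)b_z\delta_z\bigr)$, i.e. $\lambda$ applied to the noncommutative Fej\'er transform $\mathcal{F}_\Lambda$ of the symbol. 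In particular $d^s\circ s_\Lambda=s_\Lambda\circ d_\Lambda^s$, so $L_s(s_\Lambda(B))\le L_{s,\Lambda}(B)$, and $s_\Lambda\circ r_\Lambda=\lambda\circ\mathcal{F}_\Lambda$ on $\mathbb{C}[G]$.

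The first of the two estimates required by Lemma~2.1 is then immediate from Theorem~4.1: $\|s_\Lambda\circ r_\Lambda(f)-f\|=\|\lambda(f-\mathcal{F}_\Lambda(f))\|\le\epsilon_\Lambda\|d^sf\|=\epsilon_\Lambda L_s(f)$, once $s$ is large enough. The second estimate, $\|r_\Lambda\circ s_\Lambda(B)-B\|\le\epsilon_\Lambda L_{s,\Lambda}(B)$, is the substantive point. One computes $r_\Lambda\circ s_\Lambda(B)-B=P_\Lambda\lambda(h)P_\Lambda$ with $h:=\sum_{z\in B_{2\Lambda}}(1-F_\Lambda(z))b_z\,\delta_z$ and $(b_z)$ the symbol of $B$. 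The naive bound $\|P_\Lambda\lambda(h)P_\Lambda\|\le\|\lambda(h)\|$ followed by Theorem~4.1 fails, because the operator norm of $\lambda$ of the symbol of $B$ is \emph{not} controlled by $L_{s,\Lambda}(B)$: compression only decreases norms, not the other way around. Instead I extract two estimates intrinsic to the truncation: (i) the pointwise bound $\ell(z)^{s}|b_z|\le L_{s,\Lambda}(B)$ for every $z\in B_{2\Lambda}$, since each such $z$ equals a matrix coordinate $xy^{-1}$ with $x,y\in B_\Lambda$, hence is an entry of $d_\Lambda^sB$; and (ii) the weighted $\ell^2$ bound $\sum_{z\in B_{2\Lambda}}\ell(z)^{2s}F_\Lambda(z)|b_z|^2\le L_{s,\Lambda}(B)^2$, obtained by averaging $\|(d_\Lambda^sB)\delta_y\|_{\ell^2(G)}^2\le L_{s,\Lambda}(B)^2$ over $y\in B_\Lambda$ and again recognizing $\tfrac{1}{\#B_\Lambda}\#(B_\Lambda\cap z^{-1}B_\Lambda)=F_\Lambda(z)$.

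Next I estimate $\|P_\Lambda\lambda(h)P_\Lambda\|\le\|\lambda(h)\|\le C\|(1+D)^{s_0}h\|_{\ell^2(G)}$ by the rapid decay inequality with exponent $s_0$, and split the sum $\sum_{z\in B_{2\Lambda}}(1+\ell(z))^{2s_0}(1-F_\Lambda(z))^2|b_z|^2$ at a cutoff $R_\Lambda\to\infty$ chosen so that $R_\Lambda\epsilon_\Lambda\to 0$ (for instance $R_\Lambda=\epsilon_\Lambda^{-1/2}$, with $\{\epsilon_\Lambda\}$ as produced by the F{\o}lner condition for $\{B_\Lambda\}$). On $\{1\le\ell(z)\le R_\Lambda\}$ the boundary estimate \eqref{boundary} gives $1-F_\Lambda(z)\le\ell(z)\epsilon_\Lambda$ and also $F_\Lambda(z)\ge\tfrac12$ for large $\Lambda$, so this part is at most $C\,\epsilon_\Lambda^2\sum_{\ell(z)\le R_\Lambda}\ell(z)^{2s}|b_z|^2\le C\,\epsilon_\Lambda^2 L_{s,\Lambda}(B)^2$ as soon as $s\ge s_0+1$, using (ii). On $\{\ell(z)>R_\Lambda\}$, using $1-F_\Lambda(z)\le1$, the pointwise bound (i), and the polynomial growth estimate $\#\{z:\ell(z)=n\}\le Cn^{D_0}$, this part is at most $C\,L_{s,\Lambda}(B)^2\sum_{n>R_\Lambda}n^{2s_0-2s+D_0}$, which tends to $0$ provided $2s>2s_0+D_0+1$, since $R_\Lambda\to\infty$. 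Combining the two parts gives $\|r_\Lambda\circ s_\Lambda(B)-B\|\le\epsilon_\Lambda'\,L_{s,\Lambda}(B)$ with $\epsilon_\Lambda'\to0$. Fixing $s$ to be the maximum of the finitely many lower bounds that have appeared (those making $L_s$ a Lip-norm via Lemma~3.3, those letting Theorem~4.1 apply, and $s\ge s_0+1$, $2s>2s_0+D_0+1$) and feeding $\max(\epsilon_\Lambda,\epsilon_\Lambda')$ into Lemma~2.1 yields $\text{dist}_q\bigl((\mathbb{C}[G],L_s),(P_\Lambda\mathbb{C}[G]P_\Lambda,L_{s,\Lambda})\bigr)\le 2\max(\epsilon_\Lambda,\epsilon_\Lambda')\to0$.

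The main obstacle is the second estimate. The difficulty is that the entire argument must be kept inside the finite-dimensional truncation rather than lifted to $C^*_r(G)$, and the "boundary defect" $1-F_\Lambda$ must be handled directly; this is precisely where both the F{\o}lner boundary bound \eqref{boundary} (for the low-frequency part of the symbol) and the quantitative polynomial growth (for the high-frequency part, through (i)) are genuinely used — amenability or rapid decay alone would not give this second inequality.
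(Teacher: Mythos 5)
Your proposal is correct, and the overall scaffolding coincides with the paper's: you use the same two unital completely positive maps (the compression onto $P_\Lambda\ell^2(G)$ and the averaging map, which you correctly identify with $\lambda\circ\mathcal{F}_\Lambda$ on symbols, as in the paper's Lemma 5.1), the same intertwining with $d$ and $d_\Lambda$ (Remark 5.1), the same appeal to Theorem 4.1 for $\|f-\mathcal{F}_\Lambda f\|\leq\epsilon_\Lambda L_s(f)$, and the same final invocation of Lemma 2.1. Where you genuinely diverge is in the hard half, the analogue of the paper's Lemma 5.2. The paper controls the full weighted sum $\sum_{y\in B_{2\Lambda}}\ell(y)^{2s+2}|f(y)|^2$ by $L_{s+d+1,\Lambda}(A)^2$ in one stroke: it averages $\|(d_\Lambda^{s+d+1}A)\delta_x\|^2$ over $x\in B_\Lambda$, splits at $\ell(y)\leq\Lambda^{\beta/2}$, and in the outer region compensates the possibly tiny overlap $\#(B_\Lambda\cap B_\Lambda(y))\geq 1$ by spending $d$ extra derivatives, where $d$ and $\beta$ come from Tessera's quantitative boundary-volume estimate (Theorem 3.1). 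You instead work with the defect symbol $h=\sum_z(1-F_\Lambda(z))b_z\delta_z$ directly, and split at a cutoff $R_\Lambda=\epsilon_\Lambda^{-1/2}$ tied only to the qualitative F{\o}lner rate: in the bulk you use the averaged bound $\sum_z F_\Lambda(z)\ell(z)^{2s}|b_z|^2\leq L_{s,\Lambda}(B)^2$ together with $F_\Lambda\geq\tfrac12$ and the boundary bound \eqref{boundary}, while in the tail you use the pointwise entry bound $\ell(z)^s|b_z|\leq L_{s,\Lambda}(B)$ (valid because every $z\in B_{2\Lambda}$ is a coordinate $xy^{-1}$ with $x,y\in B_\Lambda$) plus a sphere count $\#\{\ell(z)=n\}\leq Cn^{D_0}$ and summability of $n^{2s_0+D_0-2s}$. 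Both routes are sound; yours buys independence from Tessera's theorem and from the extra $d+1$ derivative powers (polynomial growth enters only through the F{\o}lner property of balls, rapid decay, and the crude cardinality bound), at the cost of the slightly less transparent two-regime bookkeeping and the (harmless, but worth stating explicitly) monotonicity remark that the estimate of Theorem 4.1 persists when the exponent is raised, since $\sum_{x\neq e}\ell(x)^{2s}|f(x)|^2$ is nondecreasing in $s$ and is dominated by $\|\lambda(d^sf)\|^2$.
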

 
 Recall from Section 3, the $s$-Lip-norm is a Lip-norm induced by $s$-times derivative. That is,
 \begin{align*}
      L_s(f)&:=\|\lambda(d^sf)\|_{B(\ell^2(G))}\\
      L_{s,\Lambda}(T)&:=\|d_{\Lambda}^sT\|_{B(P_{\Lambda}\ell^2(G))}
 \end{align*}
for $f \in \mathbb{C}[G]$ and for $T\in P_{\Lambda}\mathbb{C}[G]P_{\Lambda}$.

\begin{dfn}
\rm{
We define a unital linear map
    \begin{align*}
        r_{\Lambda}: P_{\Lambda}\mathbb{C}[G]P_{\Lambda} &\rightarrow \mathbb{C}[G] \\
         (f(xy^{-1}))_{x,y \in B_{\Lambda}} &\mapsto \sum_{x \in B_{\Lambda}}f(x)\frac{\#(B_{\Lambda}\cap B_{\Lambda}(x))}{\#B_{\Lambda}} \delta_x.
    \end{align*}
    }
\end{dfn}

For $G =\mathbb{Z}^d$, the positivity of $r_{\Lambda}$ was shown by harmonic analytical techniques in \cite{leimbach2023gromov}. Here we give an operator theoretic proof for general case.

\begin{lem}
\rm{
    The map $r_{\Lambda}$ is unital and completely positive.
    }
\end{lem}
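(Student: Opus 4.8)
The plan is to realise $r_\Lambda$ as a Stinespring-type map, which makes complete positivity and unitality formal consequences. Recall that $F_\Lambda$ is supported on $B_{2\Lambda}$, so $r_\Lambda(A)=\sum_x a_x F_\Lambda(x)\,\delta_x$ for $A=(a_{xy^{-1}})_{x,y\in B_\Lambda}$, and that $F_\Lambda$ is the normalised diagonal matrix coefficient $F_\Lambda(z)=\tfrac{1}{\#B_\Lambda}\langle\chi,\lambda(z)\chi\rangle$, where $\chi\in\ell^2(G)$ is the indicator vector of $B_\Lambda$; in particular $F_\Lambda$ is a real, symmetric, positive-definite function on $G$. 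Since $\lambda$ is faithful, positivity of elements and of maps into the operator system $\mathbb{C}[G]$ may be tested after applying $\lambda$. The plan is to exhibit the isometry
\begin{equation*}
  W\colon \ell^2(G)\longrightarrow \ell^2(B_\Lambda)\otimes\ell^2(G),\qquad W\delta_h=\frac{1}{\sqrt{\#B_\Lambda}}\sum_{g\in B_\Lambda}\delta_g\otimes\delta_{g^{-1}h},
\end{equation*}
and to prove that $\lambda(r_\Lambda(A))=W^*\bigl(A\otimes\mathrm{id}_{\ell^2(G)}\bigr)W$ for every $A\in P_\Lambda\mathbb{C}[G]P_\Lambda$.

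Granting this identity, the lemma follows immediately. The map $A\mapsto A\otimes\mathrm{id}_{\ell^2(G)}$ is a unital $*$-homomorphism $B(\ell^2(B_\Lambda))\to B(\ell^2(B_\Lambda)\otimes\ell^2(G))$, hence completely positive, and $X\mapsto W^*XW$ is completely positive for the bounded operator $W$; their composition $r_\Lambda$ is therefore completely positive. For unitality, $W$ is an isometry, so $W^*W=\mathrm{id}_{\ell^2(G)}$, while the unit of $P_\Lambda\mathbb{C}[G]P_\Lambda$ is $P_\Lambda$, which is sent by $A\mapsto A\otimes\mathrm{id}$ to $\mathrm{id}_{\ell^2(B_\Lambda)}\otimes\mathrm{id}_{\ell^2(G)}$; hence $\lambda(r_\Lambda(P_\Lambda))=W^*W=\mathrm{id}_{\ell^2(G)}=\lambda(\delta_e)$.

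The verification of the identity is where the actual work sits. First I would check $W^*W=\mathrm{id}$: for each fixed $g\in B_\Lambda$ the vectors $\{\delta_g\otimes\delta_{g^{-1}h}\}_{h\in G}$ are orthonormal and $h\mapsto g^{-1}h$ is a bijection of $G$, whence $\langle W\delta_h,W\delta_{h'}\rangle=\tfrac{1}{\#B_\Lambda}\sum_{g\in B_\Lambda}\langle\delta_{g^{-1}h},\delta_{g^{-1}h'}\rangle=\delta_{h,h'}$. Next I would compute, for $A=(a_{xy^{-1}})_{x,y\in B_\Lambda}$ of Toeplitz type, the matrix coefficient
\begin{equation*}
  \bigl\langle\delta_{h'},\,W^*(A\otimes\mathrm{id})W\,\delta_h\bigr\rangle
  =\frac{1}{\#B_\Lambda}\sum_{\substack{g,g'\in B_\Lambda\\ g'^{-1}h'=g^{-1}h}}\langle\delta_{g'},A\delta_g\rangle
  =\frac{a_{h'h^{-1}}}{\#B_\Lambda}\cdot\#\bigl\{(g,g')\in B_\Lambda^2:\ g'g^{-1}=h'h^{-1}\bigr\},
\end{equation*}
where we used that the constraint $g'^{-1}h'=g^{-1}h$ is equivalent to $g'g^{-1}=h'h^{-1}$, along which $\langle\delta_{g'},A\delta_g\rangle=a_{g'g^{-1}}=a_{h'h^{-1}}$ is constant. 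The combinatorial identity
\begin{equation*}
  \#\bigl\{(g,g')\in B_\Lambda^2:\ g'g^{-1}=z\bigr\}=\#\{g\in B_\Lambda:\ zg\in B_\Lambda\}=\#\bigl(B_\Lambda\cap z^{-1}B_\Lambda\bigr)=\#B_\Lambda\cdot F_\Lambda(z^{-1})=\#B_\Lambda\cdot F_\Lambda(z)
\end{equation*}
---which uses $S=S^{-1}$, i.e.\ $B_\Lambda=B_\Lambda^{-1}$, and the symmetry of the positive-definite function $F_\Lambda$---then turns the right-hand side into $a_{h'h^{-1}}F_\Lambda(h'h^{-1})$, which is exactly $\langle\delta_{h'},\lambda(r_\Lambda(A))\delta_h\rangle$. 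Since both operators are bounded and agree on the orthonormal basis $\{\delta_h\}$, they are equal; the amplified versions are identical with $W$ replaced by $W\otimes\mathrm{id}_{\mathbb{C}^n}$, which is the full content needed for complete positivity.

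The only genuine obstacle is guessing the correct isometry $W$ and carrying out this bookkeeping carefully, in particular keeping left and right translates of balls straight; everything after the identity is formal. As a consistency check and alternative (slightly less clean) route, the same computation shows $\lambda(r_\Lambda(A))=\tfrac{1}{\#B_\Lambda}\sum_{g\in G}\rho(g)\,\iota(A)\,\rho(g)^{-1}$, where $\rho$ is the right regular representation and $\iota\colon B(\ell^2(B_\Lambda))\hookrightarrow B(\ell^2(G))$ is the extension-by-zero inclusion $A\mapsto VAV^*$; when $A\geq 0$ the partial sums over $g\in B_R$ are positive and converge strongly to $\#B_\Lambda\cdot\lambda(r_\Lambda(A))$, yielding positivity directly (and complete positivity after amplification), at the cost of a short convergence argument.
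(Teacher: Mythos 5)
Your proof is correct. The underlying mechanism is the same as the paper's --- averaging the Toeplitz compression over the translates of the ball $B_\Lambda$ --- but the packaging is genuinely different and buys you something. The paper fixes $\xi\in\ell^2(G)$, introduces the right-translation operators $U_\alpha$, and verifies the quadratic-form identity $\sum_{\alpha\in G}\langle P_\Lambda U_\alpha\xi, AP_\Lambda U_\alpha\xi\rangle = \#B_\Lambda\,\langle\xi, r_\Lambda(A)\xi\rangle$, which gives positivity directly but leaves the matrix amplification needed for \emph{complete} positivity implicit (this is essentially the route you sketch at the end as a consistency check). You instead assemble the translates into a single isometry $W$ and exhibit the operator identity $\lambda(r_\Lambda(A))=W^*(A\otimes\mathrm{id})W$, a genuine Stinespring form, after which complete positivity and unitality are formal consequences of $W^*W=\mathrm{id}$ and the fact that $A\mapsto A\otimes\mathrm{id}$ is a unital $*$-homomorphism. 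Your bookkeeping checks out: the constraint $g'^{-1}h'=g^{-1}h$ is equivalent to $g'g^{-1}=h'h^{-1}$, and $\#\{(g,g')\in B_\Lambda^2:\,g'g^{-1}=z\}=\#(B_\Lambda\cap z^{-1}B_\Lambda)=\#(B_\Lambda\cap zB_\Lambda)$, where the last equality follows already from left-translating by $z$, so you do not even need $S=S^{-1}$ at that step. One cosmetic remark: the paper's displayed formula for $r_\Lambda$ sums over $x\in B_\Lambda$, whereas the identity $r_\Lambda\circ q_\Lambda=\mathcal{F}_\Lambda$ used in Section 5 requires the sum to run over $B_{2\Lambda}$; your formula $r_\Lambda(A)=\sum_x a_xF_\Lambda(x)\delta_x$ with $F_\Lambda$ supported on $B_{2\Lambda}$ matches the intended map, and your computation is consistent with it.
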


\begin{pf}
\rm{
    We only need to show the completely positivity. 
    For each $\alpha \in G$ we define an operator
    \begin{align*}
        U_{\alpha}:H \rightarrow H
    \end{align*}
    by $(U_{\alpha}\xi)(x)=\xi(x\alpha^{-1})$ for $\xi \in H$ and $x \in G$. Then for any $ A=(f(xy^{-1}))_{x,y \in B_{\Lambda}} \in P_{\Lambda}\mathbb{C}[G]P_{\Lambda} $ and $\xi \in H$, we have
    \begin{align*}
        \sum_{\alpha \in G}
        \langle P_{\Lambda}U_{\alpha}\xi,AP_{\Lambda}U_{\alpha}\xi \rangle
        = \sum_{\alpha \in G} & \sum_{x \in B_{\Lambda}} \overline{\xi(x\alpha^{-1})} \left ( \sum_{y \in B_{\Lambda}} f(xy^{-1})\xi(y\alpha^{-1}) \right )\\
         = \sum_{\alpha \in G} &\sum_{x,y \in B_{\Lambda}} \overline{\xi(x\alpha^{-1})} f((x\alpha^{-1})(y\alpha^{-1})^{-1}) \xi(y\alpha^{-1})\\
         = \sum_{x',y'\in G} & \left (\overline{\xi(x')}f(x'(y')^{-1})\xi(y') \cdot \right . \\ & \left . \#\{(x,y,\alpha)\in B_{\Lambda}^2\times G;x\alpha^{-1}=x'\text{ and }y\alpha^{-1}=y'\} \right )\\ 
         = \sum_{x',y'\in G} &  \left (\overline{\xi(x')}f(x'(y')^{-1})\xi(y') \cdot \#(B_{\Lambda}(x')\cap B_{\Lambda}(y')) \right )\\
         = \#B_{\Lambda}\cdot & \langle \xi, r_{\Lambda}(A) \xi \rangle .
    \end{align*}
    Therefore $r_{\Lambda}$ is completely positive. 
    \qed
    }
\end{pf}

\begin{rmk}
\rm{
    Two unital and completely positive maps $q_{\Lambda}$ and $r_{\Lambda}$ commute with the derivatives in the sense that
    \begin{align*}
        q_{\Lambda}(df)=d_{\Lambda}(q_{\Lambda}f) \text{ and }r_{\Lambda}(d_{\Lambda}T)=d(r_{\lambda}T)
    \end{align*}
    for all $f \in \mathbb{C}[G]$ and $T \in P_{\Lambda}\mathbb{C}[G]P_{\Lambda}$. Especially these two maps are contractive with respect to Lip-norms $L_s$ and $L_{s,\Lambda}$.
    }
\end{rmk}

Note that 
\begin{align*}
    f-r_{\Lambda}\circ q_{\lambda}(f)&=f-\mathcal{F}_{\Lambda}(f)\\
    T-q_{\lambda}\circ r_{\Lambda}(T)&=(T_{xy^{-1}})_{x,y\in B_{\Lambda}}-(F_{\Lambda}(xy^{-1})T_{xy^{-1}})_{x,y\in B_{\Lambda}}.
\end{align*}
In Theorem 3.1, if $G$ has polynomial growth, we have already established the estimate
\begin{equation}
\label{C^*-estimate}
    \|f-r_{\Lambda}\circ q_{\Lambda}(f)\|_{C^*_r(G)}=\|f-\mathcal{F}_{\Lambda}(f)\|_{C^*_r(G)} \leq \epsilon_{\Lambda} L_s (f)
\end{equation}
with a positive integer $s>0$ and a sequence of positive numbers $\{\epsilon_{\Lambda}\}$ which converges to $0$. The other inequality is much more delicate because we do not have Haagerup inequality for truncation in the following sense. Even if $G$ satisfies the Haagerup inequality for constants $C$ and $s$ (i.e. $ \|\lambda(f)\|_{C^*_r(G)} \leq C\|f\|_{H^s} $ for all $f\in \mathbb{C}[G]$) we can not conclude 
\begin{align*}
     \|P_{\Lambda}\lambda(f)P_{\Lambda}\|_{B(P_{\Lambda }\ell^2(G))}\leq C\|P_{\Lambda}(1+D_{\Lambda})^sf\|_{P_{\Lambda}\ell^2(G)}
\end{align*}
in general. For example, if $f=\delta _x$ for $x \in G$ with $\Lambda <\ell(x) \leq 2\Lambda$ then the right hand side is $0$ but $ P_{\Lambda}\lambda(f)P_{\Lambda} $ is a nonzero operator. 
But actually, we can prove the following Lemma.

\begin{lem}
\rm{
    For any group $G$ with polynomial growth, there exists positive integer $s>0$ and a sequence $\{\epsilon_{\Lambda}\}_{\Lambda}$ of positive number which converges to $0$ as $\Lambda$ goes to infinity such that
    \begin{align*}
         \|T-q_{\lambda}\circ r_{\Lambda}(T)\|_{B(P_{\Lambda}\ell^2(G))} \leq \epsilon_{\Lambda}  L_{s,\Lambda}(T).
    \end{align*}
    }
\end{lem}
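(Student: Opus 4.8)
The plan is to sidestep the missing Haagerup inequality for truncations by expanding the error operator along the ``partial translations'' $S_z := P_\Lambda\lambda(\delta_z)P_\Lambda$ and controlling the resulting $\ell^1$-type sum of coefficients using only the F{\o}lner property of $\{B_\Lambda\}_\Lambda$ and the polynomial volume growth of $G$.

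I would start from the two elementary observations that $\|S_z\|\le 1$ for every $z\in G$ (because $\lambda(\delta_z)$ is unitary and $P_\Lambda$ a projection) and that $T=\sum_{z\in B_{2\Lambda}}T_zS_z$ for $T=(T_{xy^{-1}})_{x,y\in B_\Lambda}$. Combined with the identity $T-q_\Lambda\circ r_\Lambda(T)=\bigl((1-F_\Lambda(xy^{-1}))T_{xy^{-1}}\bigr)_{x,y\in B_\Lambda}$ recorded just above the statement, this gives
\begin{align*}
T-q_\Lambda\circ r_\Lambda(T)=\sum_{z\in B_{2\Lambda}\setminus\{e\}}\bigl(1-F_\Lambda(z)\bigr)\,T_z\,S_z,
\end{align*}
the term $z=e$ disappearing because $F_\Lambda(e)=1$, and hence, by the triangle inequality and $\|S_z\|\le1$,
\begin{align*}
\|T-q_\Lambda\circ r_\Lambda(T)\|_{B(P_\Lambda\ell^2(G))}\le\sum_{z\in B_{2\Lambda}\setminus\{e\}}\bigl(1-F_\Lambda(z)\bigr)\,|T_z|.
\end{align*}

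Next I would bound each coefficient $|T_z|$ in terms of $L_{s,\Lambda}(T)$. Whenever $S_z\neq0$ there exist $x,y\in B_\Lambda$ with $xy^{-1}=z$, and then $\ell(z)^s|T_z|=|(d_\Lambda^sT)_{x,y}|\le\|d_\Lambda^sT\|_{B(P_\Lambda\ell^2(G))}=L_{s,\Lambda}(T)$, so $|T_z|\le\ell(z)^{-s}L_{s,\Lambda}(T)$ for every contributing $z\neq e$. Inserting this, and then using the F{\o}lner estimate \eqref{boundary} — which, being obtained by telescoping along a word for $z$, holds for every $z\in G$ and reads $1-F_\Lambda(z)=\#(B_\Lambda\setminus B_\Lambda(z))/\#B_\Lambda\le\ell(z)\,\epsilon_\Lambda$ with $\epsilon_\Lambda\to0$ (valid since $\{B_\Lambda\}_\Lambda$ is F{\o}lner for polynomial growth $G$) — I obtain
\begin{align*}
\|T-q_\Lambda\circ r_\Lambda(T)\|_{B(P_\Lambda\ell^2(G))}\le L_{s,\Lambda}(T)\,\epsilon_\Lambda\sum_{z\in G\setminus\{e\}}\ell(z)^{1-s}.
\end{align*}
Polynomial growth is used only to make this last sum finite: if $\#B_n\le Cn^d$ then $\sum_{z\neq e}\ell(z)^{1-s}=\sum_{n\ge1}\#\{z:\ell(z)=n\}\,n^{1-s}\le C\sum_{n\ge1}n^{d+1-s}<\infty$ as soon as $s>d+2$. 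For such $s$ the last display has the form $\epsilon'_\Lambda L_{s,\Lambda}(T)$ with $\epsilon'_\Lambda:=\epsilon_\Lambda\sum_{z\neq e}\ell(z)^{1-s}\to0$, which is exactly the statement of the lemma; in the proof of the main theorem one then enlarges $s$ further so that it also meets the hypotheses needed for $L_s$ to be a Lip-norm and for the companion estimate \eqref{C^*-estimate}.

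The step that could go wrong is precisely the one flagged before the statement: on the truncated operator system there is no bound for $\|P_\Lambda\lambda(f)P_\Lambda\|$ by a weighted $\ell^2$-norm of the symbol $f$, so the $C^*$-argument of Theorem 3.1 cannot be transplanted directly. The decomposition into partial translations with $\|S_z\|\le1$ bypasses this by trading the operator norm for the crude sum $\sum_z(1-F_\Lambda(z))|T_z|$; this trade is harmless only because the $s$-fold derivative forces $\ell(z)^s|T_z|\le L_{s,\Lambda}(T)$, and that decay beats the polynomially many group elements of each word length once $s$ is large. The only technical point to watch is that \eqref{boundary} must be invoked in its form valid for all $z\in G$, not merely for $z$ in the generating set.
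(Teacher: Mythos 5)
Your argument is correct, and it takes a genuinely different route from the paper. You expand the error as $\sum_{z\neq e}(1-F_\Lambda(z))T_zS_z$ with $S_z=P_\Lambda\lambda(\delta_z)P_\Lambda$ contractive, bound each coefficient entrywise via $\ell(z)^s|T_z|=|(d_\Lambda^sT)_{x,y}|\le L_{s,\Lambda}(T)$ (legitimate, since every $z\in B_{2\Lambda}$ occurs as $xy^{-1}$ with $x,y\in B_\Lambda$ by splitting a geodesic), and then close the $\ell^1$-sum $\sum_{z\neq e}\ell(z)^{1-s}$ using only the volume growth bound $\#B_n\le Cn^d$ and the F{\o}lner estimate \eqref{boundary} in its general form $1-F_\Lambda(z)\le\ell(z)\epsilon_\Lambda$. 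The paper instead keeps everything in $\ell^2$: it sums $\|d_\Lambda^{s+d+1}(A)\delta_x\|^2$ over all $x\in B_\Lambda$, observes that each $y\in B_{2\Lambda}$ is then weighted by $\#(B_\Lambda\cap B_\Lambda(y))$, and uses Tessera's boundary estimate (for short $y$) together with extra powers $\ell(y)^{2d}$ (for long $y$) to show that $\#B_\Lambda\cdot L_{s+d+1,\Lambda}(A)^2$ dominates the weighted $\ell^2$-norm $\sum_y\ell(y)^{2s+2}|f(y)|^2$, which in turn controls the operator norm of the error through the rapid decay property as in Theorem 4.1. Your approach buys simplicity and complete independence from the Haagerup inequality, at the price of a possibly larger derivative order ($s>d+2$ versus an order tied to the rapid decay exponent); both are of the order of the growth degree, so nothing essential is lost. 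One point worth making explicit if you write this up: the enlargement of $s$ at the end should be handled by running your entire estimate directly at the final value of $s$ (which your argument permits for any $s>d+2$), rather than by claiming $L_{s,\Lambda}\le L_{s',\Lambda}$ for $s'>s$ — Schur multipliers with entrywise larger symbols need not have larger operator norm, so monotonicity of the Lip-norms in $s$ is not automatic.
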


\begin{pf}
\rm{
    Since $G$ has polynomial growth, by Lemma 3.4, there exist constant $C$ and $\beta>0$ such that $\frac{\#(\partial B_{\Lambda})}{\# B_{\Lambda}}\leq \frac{C}{\Lambda^{\beta}}$. We can take a constant $d$ such that $(\Lambda^{\beta})^d$ dominates $\#B_{\Lambda}$. Since groups with polynomial growth has rapid decay property,
    there exists $s>0$ and $C'>0$ such that every $f\in \mathbb{C}[G]$ satisfies the Haagerup inequality;
    \begin{align*}
        \|\lambda(f)\|_{B(\ell^2(G))} \leq C'\sum_{x\in G} (1+\ell(x))^{2s}|f(x)|^2.
    \end{align*}
    Let $f\in \mathbb{C}[G]$ be such that $\text{supp}(f)\subset B_{2\Lambda}$. For $ x \in B_{\Lambda}$ we have
    \begin{align*}
        (d_{\Lambda}^{s+d+1}P_{\Lambda}\lambda(f)P_{\Lambda}) \delta_x =\sum_{y\in B_{\Lambda}} \ell(yx^{-1})^{s+d+1}f(yx^{-1})\delta_{y}.
    \end{align*}
    Therefore, by denoting $A:=P_{\Lambda}\lambda(f)P_{\Lambda}$ we have
    \begin{equation}
    \label{evaluation}
        L_{s+d+1,\Lambda}(A)^2= \|d_{\Lambda}^{s+d+1}(A)\|^2\geq \sum_{y \in B_{\Lambda}(x^{-1})} \ell(y)^{2s+2d+2}|f(y)|^2 .  
    \end{equation}
    By taking the sum of \eqref{evaluation} for all $x \in B_{\Lambda}$,  
    \begin{equation}
    \label{mainestimate}
    \begin{split}
        \#B_{\Lambda} \cdot L_{s+d+1,\Lambda}(A)^2\geq &  \sum_{x \in B_{\Lambda}} \sum_{y \in B_{\Lambda}(x^{-1})} \ell(y)^{2s+2d+2}|f(y)|^2\\
        =&\sum_{y \in B_{2\Lambda}} \#(B_{\Lambda}\cap B_{\Lambda}(y)) \cdot \ell(y)^{2s+2d+2}|f(y)|^2\\
        =& \sum_{\ell(y)\leq {\Lambda}^{\frac{1}{2}\beta}} \#(B_{\Lambda}\cap B_{\Lambda}(y)) \cdot \ell(y)^{2s+2d+2}|f(y)|^2\\ &+ \sum_{{\Lambda}^{\frac{1}{2}\beta}<\ell(y)\leq 2\Lambda} \#(B_{\Lambda}\cap B_{\Lambda}(y)) \cdot \ell(y)^{2s+2d+2}|f(y)|^2  \\
        \geq& \sum_{\ell(y)\leq {\Lambda}^{\frac{1}{2}\beta}} \#(B_{\Lambda})(1-C\frac{1}{\Lambda^{\beta}}\cdot {\Lambda}^{\frac{1}{2}\beta}) \ell(y)^{2s+2d+2}|f(y)|^2\\
        &+ \sum_{{\Lambda}^{\frac{1}{2}\beta}<\ell(y)\leq 2\Lambda} \ell(y)^{2s+2d+2}|f(y)|^2  \\
        \geq & \min\{\#(B_{\Lambda})(1-C\frac{1}{\Lambda^{\beta}}\cdot {\Lambda}^{\frac{1}{2}\beta}), (\Lambda^{\frac{1}{2}\beta})^{2d}\} \sum_{y\in B_{2\Lambda}} \ell(y)^{2s+2}|f(y)|^2\\
        \geq & \min\{\#(B_{\Lambda})(1-C\frac{1}{\Lambda^{\frac{1}{2}\beta}}), (\Lambda^{d\beta})\} \sum_{y\in B_{2\Lambda}} \ell(y)^{2s+2}|f(y)|^2.
        \end{split}
        \end{equation}
    For the second inequality, we use the formula \eqref{boundary} and the estimate for the volume of the boundary. By the choice of $d$, both term in $\min$ is larger than $\#B_{\Lambda}$ up to a constant depending only on $G$.
    Since we know from the estimate of \eqref{sobolev} that there exists a sequence ${\epsilon_{\Lambda}}$ converges to $0$ such that
    \begin{align*}
        \|q_{\Lambda}\lambda(f)&-q_{\Lambda}\circ r_{\Lambda}\circ q_{\Lambda}(\lambda(f))\|_{B(P_{\Lambda}\ell^2(G))}\\
        &\leq \|\lambda(f)-r_{\Lambda}\circ q_{\Lambda}(\lambda(f))\|_{B(\ell^2(G))}\leq \epsilon_{\Lambda} \sum_{y\in B_{2\Lambda}} \ell(y)^{2s+2}|f(y)|^2
    \end{align*}
    for any $f\in \mathbb{C}[G]$ supported in $B_{2\Lambda}$, by combining this inequality with \eqref{mainestimate}, we have the desired estimate.
    \qed
}
\end{pf}
 Applying Lemma 5.1, Remark 5.1, \eqref{C^*-estimate} and Lemma 5.2 to lemma 2.1, we have proven Theorem 5.1.

Finally, we study the natural question whether $(P_{\Lambda}\mathbb{C}[G]P_{\Lambda})$ converges to the (reduced) group $C^*$-algebra $C^*_r(G)$ or not under the same setting as Theorem 5.1. To answer it, we have to make it clear how to extend the Lip-norm $L_s$ on $\mathbb{C}[G]$ to its completion $C^*_r(G)$ following Chapter 4 of the paper of Rieffel \cite{rieffel1999metrics}.
Let $\mathcal{L}_1$ be the ball with respect to the Lip-norm $L_s$ i.e,
\begin{align*}
    \mathcal{L}_1:=\{f \in \mathbb{C}[G]:L_s(f)\leq 1\}.
\end{align*}
We denote the norm closure of $\mathcal{L}_1$ in $C^*_r(G)$ (rather than in $\mathbb{C}[G]$) by $\overline{\mathcal{L}}_1$. Then we define a (densely defined) seminorm $\overline{L}$ on $C^*_r(G)$ by
\begin{align*}
    \overline{L}_s(f)=\inf \{r\in \mathbb{R}_+:\text{ }f \in r \overline{\mathcal{L}}_1\}.
\end{align*}
Then it is shown in Proposition 4.4 of \cite{rieffel1999metrics} that if $L_s$ is a Lip-norm then $\overline{L}_s$ is a Lip-norm on $C^*_r(G)$ in the sense of Definition 2.1 which extends $L_s$. Using $\overline{L}_s$, we can show the $C^*$-algebra version of Theorem 5.1.

\begin{cor}
    \rm{
     If $G$ is a finitely generated group with polynomial growth, then there exists $s$ such that we have the quantum Gromov-Hausdorff convergence 
     \begin{align*}
     \text{dist}_q((C^*_r(G),\overline{L}_s),(P_{\Lambda}\mathbb{C}[G]P_{\Lambda},L_{s,\Lambda}))\rightarrow 0.
 \end{align*}
    }
\end{cor}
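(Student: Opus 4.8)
The plan is to reduce Corollary 5.1 to Theorem 5.1 by means of the triangle inequality for $\text{dist}_q$ (established in \cite{rieffel2004gromov}), the only new ingredient being that replacing $\mathbb{C}[G]$ by its completion $C^*_r(G)$, equipped with the extended Lip-norm $\overline{L}_s$, costs nothing:
\begin{align*}
\text{dist}_q\big((C^*_r(G),\overline{L}_s),(\mathbb{C}[G],L_s)\big)=0.
\end{align*}
Granting this, since $\text{dist}_q((\mathbb{C}[G],L_s),(P_{\Lambda}\mathbb{C}[G]P_{\Lambda},L_{s,\Lambda}))\to 0$ by Theorem 5.1, the triangle inequality gives $\text{dist}_q((C^*_r(G),\overline{L}_s),(P_{\Lambda}\mathbb{C}[G]P_{\Lambda},L_{s,\Lambda}))\le\text{dist}_q((C^*_r(G),\overline{L}_s),(\mathbb{C}[G],L_s))+\text{dist}_q((\mathbb{C}[G],L_s),(P_{\Lambda}\mathbb{C}[G]P_{\Lambda},L_{s,\Lambda}))\to 0$ as $\Lambda\to\infty$. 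Here $(C^*_r(G),\overline{L}_s)$ is indeed a compact quantum metric space by Proposition 4.4 of \cite{rieffel1999metrics}, $L_s$ being a Lip-norm for $s$ large by Lemma 3.2.

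To establish the vanishing of the distance, fix $\delta>0$ and define a seminorm on $C^*_r(G)\oplus\mathbb{C}[G]$ by $N_{\delta}(a,b):=\tfrac{1}{\delta}\,\|a-b\|_{C^*_r(G)}$, which makes sense since $\mathbb{C}[G]\subset C^*_r(G)$. I would check that $N_{\delta}$ is a bridge in the sense of Definition 2.3: conditions 1) and 2) are immediate; for 3), given $a\in C^*_r(G)$ with $\overline{L}_s(a)<\infty$, the very definition of $\overline{L}_s$ exhibits $a$ as a norm-limit of elements $b\in\mathbb{C}[G]$ with $L_s(b)$ only slightly larger than $\overline{L}_s(a)$, so one may pick such a $b$ with also $N_{\delta}(a,b)$ as small as desired, while conversely, for $b\in\mathbb{C}[G]$ one simply takes $a=b$, using that $\overline{L}_s$ restricts to $L_s$ on $\mathbb{C}[G]$ (a consequence of the lower semicontinuity of $L_s$ with respect to the operator norm from Lemma 3.1, which gives $\overline{\mathcal{L}}_1\cap\mathbb{C}[G]=\mathcal{L}_1$). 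By Theorem 2.1, $L:=L_s\lor\overline{L}_s\lor N_{\delta}$ is then an admissible Lip-norm on $C^*_r(G)\oplus\mathbb{C}[G]$. Since a state on an operator system is automatically contractive for the norm, every $\mu\in S(\mathbb{C}[G])$ has a unique extension $\widetilde{\mu}\in S(C^*_r(G))$, and restriction is inverse to extension; for any $(a,b)$ with $L(a,b)\le 1$ we get $|\widetilde{\mu}(a)-\mu(b)|=|\widetilde{\mu}(a-b)|\le\|a-b\|\le\delta N_\delta(a,b)\le\delta$, and the symmetric estimate holds as well, so the Hausdorff distance between the two canonical copies of the state spaces inside $(S(C^*_r(G)\oplus\mathbb{C}[G]),d^L)$ is at most $\delta$. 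Letting $\delta\to 0$ yields $\text{dist}_q((C^*_r(G),\overline{L}_s),(\mathbb{C}[G],L_s))=0$.

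Alternatively, and avoiding the triangle inequality altogether, one can re-run the proof of Theorem 5.1 with $C^*_r(G)$ in place of $\mathbb{C}[G]$: the maps $q_{\Lambda}$ and $r_{\Lambda}$ remain unital completely positive and contractive for the Lip-norms, where for $q_{\Lambda}$ on all of $C^*_r(G)$ this is seen by approximating in norm by elements of $\mathbb{C}[G]$ (using that $q_{\Lambda}$ is norm-contractive and that $d_{\Lambda}^s$ is continuous on the finite-dimensional space $P_{\Lambda}\mathbb{C}[G]P_{\Lambda}$); the estimate \eqref{C^*-estimate} extends from $\mathbb{C}[G]$ to $C^*_r(G)$ by the same density argument; and Lemma 5.2 is literally unchanged, since it concerns only the truncated operator systems. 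Lemma 2.1 then applies verbatim. In either route I do not expect a genuinely new obstacle beyond Theorem 5.1: the single point requiring care is the behaviour of $\overline{L}_s$ under norm-approximation by elements of $\mathbb{C}[G]$, and this is handled routinely via Proposition 4.4 of \cite{rieffel1999metrics} together with the lower semicontinuity of Lemma 3.1.
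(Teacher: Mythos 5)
Your proposal is correct, and your primary route is genuinely different from the paper's. The paper takes exactly what you call the ``alternative'' route: it keeps the maps $q_{\Lambda}$ and $r_{\Lambda}$, observes that Lemma 5.2 is untouched (it lives entirely on the truncations), and extends the estimate \eqref{C^*-estimate} to all of $C^*_r(G)$ by the density argument you sketch --- given $f$ with $\overline{L}_s(f)=R<\infty$, pick $f_j\in R\mathcal{L}_1$ with $f_j\to f$ in norm, use norm-continuity of $r_{\Lambda}\circ q_{\Lambda}$ to pass to the limit in $\|f_j-r_{\Lambda}\circ q_{\Lambda}(f_j)\|\le\epsilon_{\Lambda}L_s(f_j)$, and conclude $\|f-r_{\Lambda}\circ q_{\Lambda}(f)\|\le\epsilon_{\Lambda}\overline{L}_s(f)$ --- after which Lemma 2.1 applies as before. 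Your main route instead isolates the general statement $\text{dist}_q((C^*_r(G),\overline{L}_s),(\mathbb{C}[G],L_s))=0$ via the bridge $N_{\delta}(a,b)=\tfrac{1}{\delta}\|a-b\|$ and then invokes the triangle inequality for $\text{dist}_q$; the bridge verification is sound (for condition 3 in the direction $\mathbb{C}[G]\to C^*_r(G)$ you only need $\overline{L}_s(b)\le L_s(b)$, which is immediate from $b\in L_s(b)\mathcal{L}_1\subset L_s(b)\overline{\mathcal{L}}_1$ and does not even require the lower semicontinuity you cite). What your route buys is modularity: the distance-zero statement is a completely general fact about a compact quantum metric space and its closure, independent of the group structure, so the corollary follows formally from Theorem 5.1. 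What it costs is the need to import the triangle inequality for $\text{dist}_q$ from \cite{rieffel2004gromov} (which the paper never uses) and a second bridge construction, whereas the paper's route reuses the already-assembled machinery of Lemma 2.1 and yields the same quantitative rate $\epsilon_{\Lambda}$ for the completed algebra.
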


\begin{pf}
    \rm{
    We only need to show that
    \begin{align*}
        \|f-r_{\Lambda}\circ q_{\Lambda}(f)\| \leq \epsilon_{\Lambda} \overline{L}_s(f)
    \end{align*}
    for all $f\in C^*_r(G)$ and a sequence of positive numbers $\{\epsilon_{\Lambda}\}$ appeared in \eqref{C^*-estimate}. Let us assume $R:=\overline{L}_s(f)< \infty$. Then there exists a sequence $\{f_j\}$ in $R\mathcal{L}_1$ such that $\|f-f_j\|_{C^*_r(G)}\rightarrow 0$. Therefore
    \begin{align*}
        \|f-r_{\Lambda}\circ q_{\Lambda}(f)\|&=\lim_j \|f_j-r_{\Lambda}\circ q_{\Lambda}(f_j)\|\\
        & \leq \limsup_j \epsilon_{\Lambda} L_s(f_j) \leq \epsilon_{\Lambda} R=\epsilon_{\Lambda} \overline{L}_s(f).
    \end{align*}
    \qed
    }
\end{pf}

\section*{Acknowledgement}
The author would like to thank his advisor Professor Guoliang Yu for encouragements and discussions. He is also grateful to Jinmin Wang for discussing technical difficulties.

\begin{comment}
For $f \in \mathbb{C}[G]$, we denote $\Tilde{f}:=f-f(e)\delta_e$.
\begin{align*}
    \|D(1+D)^sf\|_{\ell^2}^2 &= \|D(1+D)^s\Tilde{f}\|_{\ell^2}^2 \\
    &=\sum_g |\ell(g)(1+\ell(g))^s\Tilde{f}(g)|^2 \\
    &\leq 2^{2s}\sum_g |\ell(g))^{2s+1}\Tilde{f}(g)|^2 \\
    & \leq 2^{2s} \|\underbrace{[D,\cdots[D, [D}_{s\text{-times}},\Tilde{f}]]\cdots]\delta_e\|_{\ell^2} \\
    & \leq 2^{2s} \|\underbrace{[D,\cdots[D, [D}_{s\text{-times}},\Tilde{f}]]\cdots]\|_{C^*_r(G)}\\
    & = 2^{2s} \|\underbrace{[D,\cdots[D, [D}_{s\text{-times}},f]]\cdots]\|_{C^*_r(G)}
\end{align*}

\end{comment}

\begin{comment}
    \begin{lem}
\rm{
    Assume a finitely generated group $G$ satisfies the Haagerup inequality for constants $C$ and $s$ (i.e. $ \|f\|_{C^*_r(G)} \leq C\|f\|_{H^s} $ for all $f\in \mathbb{C}[G]$)
    then we have
    \begin{align*}
        \|P_{\Lambda}\lambda(f)P_{\Lambda}\|_{B(P_{\Lambda }H)}\leq C\|P_{\Lambda}(1+D_{\Lambda})^sf\|_{P_{\Lambda}H}
    \end{align*}
for all $f\in \mathbb{C}[G]_{\Lambda}$
    }
\end{lem}
\end{comment}

\bibliographystyle{plain}
\bibliography{main}

\end{document}